\documentclass{article}

\usepackage{spiral_layout1}
\usepackage{cite}
\usepackage[utf8]{inputenc} % allow utf-8 input
\usepackage[T1]{fontenc}    % use 8-bit T1 fonts
\usepackage{hyperref}       % hyperlinks
\usepackage{url}            % simple URL typesetting
\usepackage{booktabs}       % professional-quality tables
\usepackage{amsfonts, amssymb}       % blackboard math symbols
\usepackage{nicefrac}       % compact symbols for 1/2, etc.
\usepackage{microtype}      % microtypography
\usepackage{lipsum}
\usepackage{graphicx}
\graphicspath{ {./images/} }
\usepackage{kotex}
\usepackage{amsmath, amsthm}
\newtheorem{theorem}{Theorem}[]
\newtheorem{definition}[theorem]{Definition}

\newtheorem{lemma}[theorem]{Lemma}

\newtheorem{corollary}[theorem]{Corollary}
\newtheorem*{theorem*}{Theorem}

\newlength{\myindent}              
\newcommand{\ind}{\hspace*{\myindent}}

\title{\bf A Logarithmic Spiral Formed by a Sequence of Regular Polygons}

\author{
 Juno Park \\
 Korea Science Academy of KAIST \\
 Busan, Republic of Korea \\[0.7pt]
 \vspace{-3pt}
  \texttt{23-056@ksa.hs.kr} \\
}

\begin{document}

\maketitle
\begin{abstract}
When the sequence of regular polygons with consecutively increasing numbers of sides is joined edge-to-edge in a single direction while minimizing bending, the resulting structure assumes the shape of a logarithmic spiral. This paper proves that this spiral takes the form $r=\exp(4\theta/\pi)$. Specifically, it is derived that the distances between the curve and the centers of the even-sided and odd-sided regular polygons converge to $5/6$ and $7/12$, respectively, with the centers extending outward along the inner side of the spiral. A similar analysis applied to the sequence of regular polygons with consecutively increasing odd numbers of sides reveals that it forms the same type of spiral, establishing that the distances to the centers converge to $7/24$.
\end{abstract}

\section{Introduction}
\begin{figure}[h]
    \centering
    \includegraphics[width=0.45\textwidth]{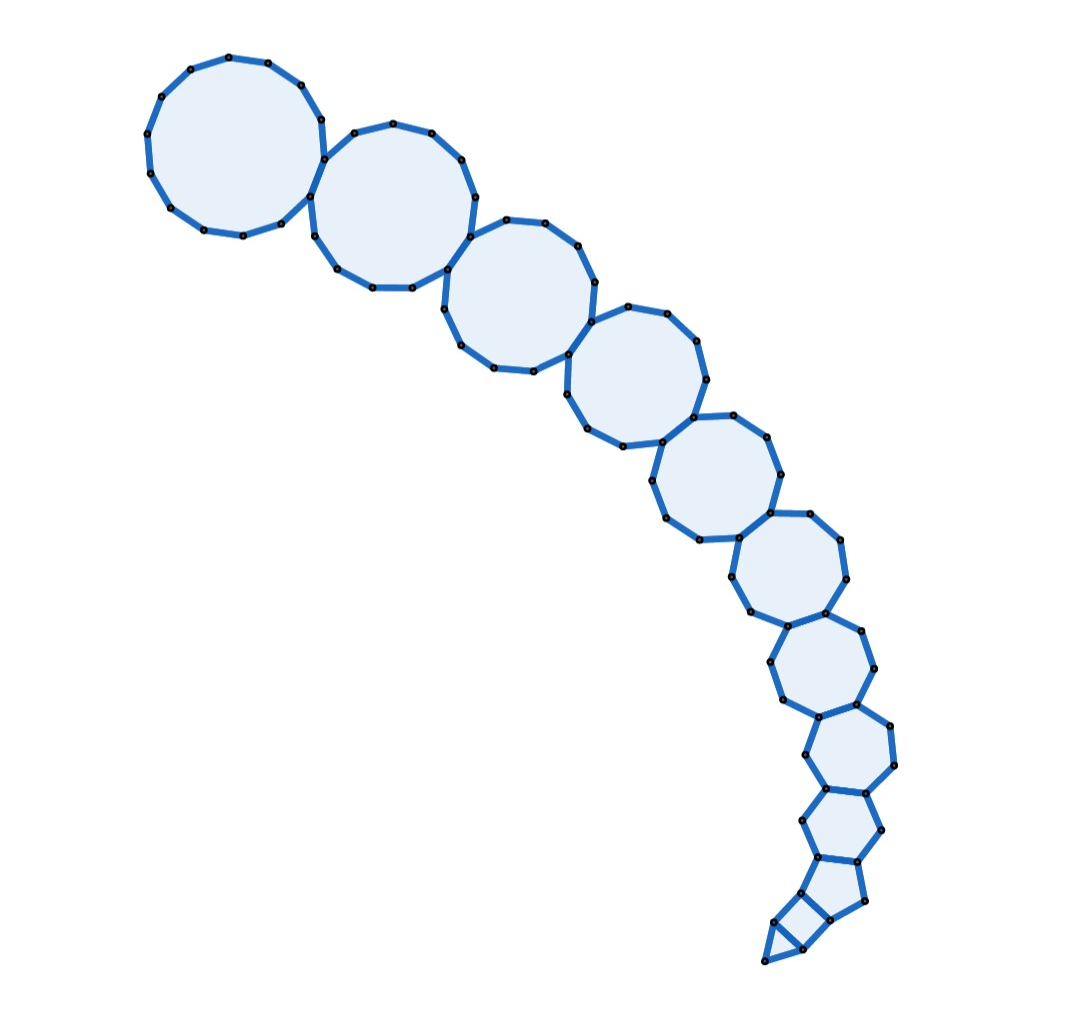}
    \caption{The sequence of regular polygons}
    \label{fig.1}
\end{figure}
Logarithmic spirals have long been a subject of historical study and a source of aesthetic admiration \cite[Chap.~11]{maor2015e}, \cite{thompson1942growth}. Meanwhile, numerous attempts have been made to construct spirals using specific types of polygons \cite{davis1993spirals,Fathauer2021log}, especially regular polygons \cite{fridberg2025regular,mendler2016polygon}. Building upon this, significant research has been dedicated to analytically investigating the properties of spirals generated by such elementary polygons \cite{brink2012spiral,fridberg2025regular}.

\vspace{-1pt}

\ind This paper investigates the logarithmic spiral structure formed by the infinite sequence of regular polygons illustrated in Figure 1. Each regular $(n+1)$-gon shares a side with the regular $n$-gon. The arrangement is constructed to minimize the turning angle between the segment connecting the centers of the regular $(n-1)$-gon and the regular $n$-gon, and the segment connecting the centers of the regular $n$-gon and the regular $(n+1)$-gon. Additionally, the structure extends while bending exclusively to the left. Let the side length of all regular polygons be 1. To analyze this structure, we place it in the complex plane. A sequence corresponding to the centers of the regular polygons is formulated to analyze the curve approximated by the locus of these centers.

\section{Sequence of Centers}

\begin{figure}[h]
    \centering
    \includegraphics[width=0.4\textwidth]{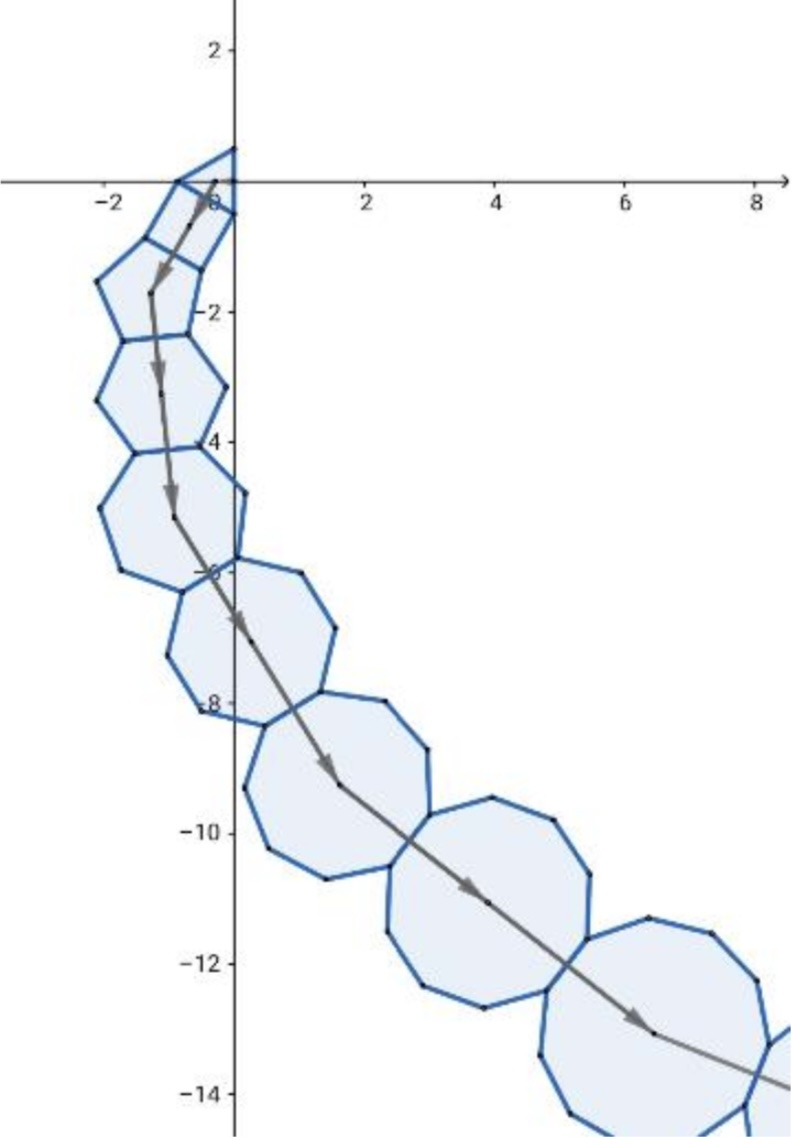}
    \caption{Centers of regular polygons}
    \label{fig.2}
\end{figure}
Figure 2 illustrates the spiral where the midpoint of a side of the equilateral triangle is placed at the origin, and its center is located at $-\frac{\sqrt{3}}{6}+0i$. As observed in the figure, the center of the regular $n$-gon is obtained by adding an appropriate complex number to the center of the regular $(n-1)$-gon. The following sequence is investigated in this context.\\[-6pt]

\begin{definition}
For $n\geq 3$, the sequence of complex numbers $P_n$ is defined as follows:

\vspace{-2pt}

\begin{equation} \label{one}
    P_n=\sum_{k=2}^{n-1}\frac{1}{2}{\left (\cot\left(\frac{\pi}{k}\right)+\cot{\left(\frac{\pi}{k+1}\right)}\right )}\ e^{i\frac{\pi}{2}\left(H_k + h_k \right )}
\end{equation}

where $H_k = \sum_{j=1}^k \frac{1}{j}$ and $h_k = \sum_{j=1}^k \frac{(-1)^{j-1}}{j}$ (this notation is adopted throughout the paper).
\end{definition}

\ind The value $P_n$ defined above coincides with the complex number corresponding to the center of the regular $n$-gon in Figure 2. For a regular $k$-gon with side length 1, the perpendicular distance from the center to a side is $\frac{1}{2}\cot\left(\frac{\pi}{k}\right)$. Hence, the magnitude of the vector connecting the center of the regular $k$-gon to the center of the regular $(k+1)$-gon is $\frac{1}{2}\left(\cot\left(\frac{\pi}{k}\right)+\cot{\left(\frac{\pi}{k+1}\right)}\right)$. On the other hand, the angle of this vector relative to the real axis is given by $\pi, \pi+\frac{\pi}{3}, \pi+\frac{\pi}{3}, \pi+\frac{\pi}{3}+\frac{\pi}{5}, \pi+\frac{\pi}{3}+\frac{\pi}{5}, \pi+\frac{\pi}{3}+\frac{\pi}{5}+\frac{\pi}{7}, \dots$ for $k=3, 4, 5, 6, 7, 8, \dots$, respectively. Thus, the \vadjust{\vspace{2pt}}
general form of the angle is expressed as $\frac{\pi}{2}\left(H_k + h_k \right )$.\\[-10pt]

\ind In the configuration of the spiral shown in Figure 2, while the condition of unit side length is natural, the specific placement of the equilateral triangle (and the consequent placement of the entire spiral) is quite arbitrary. Therefore, the spiral structure is considered equivalent up to rigid transformations; that is, it remains identical under rotation or translation. Consequently, the study primarily focuses on the image of $P_n$ under an appropriate orientation-preserving isometry $T$ in the (complex) plane. \\[-10pt]

\ind In this paper, we establish the following theorem.

\begin{theorem*}
    There exists an orientation-preserving isometry $T$ of the plane such that the distance from $T(P_{n})$ to the curve $r=e^{\frac{4}{\pi}\theta}$ is expressed as
    \[\frac{17}{24}+ \frac{(-1)^{n}}{8}+\mathcal{O}\left(\frac{1}{n}\right)\]
    as $n \to \infty$.
\end{theorem*}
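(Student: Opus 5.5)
The plan is to find an explicit asymptotic expansion of $P_n$ that is accurate to $o(1)$ in absolute terms. Then I would choose $T$ so that it sends the "asymptotic center" of the expansion to the origin and fixes the rotation, and finally convert the radial discrepancy between $T(P_n)$ and the curve into a normal distance.

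\textbf{Step 1: rewrite the sum.} Since $H_k+h_k=2\sum_{j\le k,\,j\text{ odd}}1/j$, the angle is $\varphi_k=\frac{\pi}{2}(H_k+h_k)=\pi\sum_{j\le k,\ j \text{ odd}}1/j$. Hence $\varphi_{2m}=\varphi_{2m+1}$. I would group the terms $k=2m,2m+1$ together, so that
\[
P_{2M}=\sum_{m} b_m e^{i\psi_m}+(\text{boundary terms}),\qquad \psi_m=\pi\sum_{l=1}^{m}\frac{1}{2l-1},
\]
with
\[
b_m=a_{2m}+a_{2m+1},\qquad a_k=\tfrac12\Bigl(\cot\tfrac{\pi}{k}+\cot\tfrac{\pi}{k+1}\Bigr).
\]
From $\cot x=1/x-x/3+O(x^3)$ we get $a_k=\frac{2k+1}{2\pi}+O(1/k)$, which gives $b_m$ up to $O(1/m)$. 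Similarly $\psi_m=\frac{\pi}{2}\log m+\pi(\tfrac{\gamma}{2}+\log 2)+\frac{c}{m^2}+\cdots$. Only the $\log$ and constant terms, together with any $1/m$ corrections, will matter at the order needed.

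\textbf{Step 2: summation with a constant.} Write $e^{i\psi_m}=e^{i\beta}m^{i\pi/2}(1+O(m^{-2}))$ and $b_m=\frac{4m}{\pi}+\beta_0+O(1/m)$. I would then apply Euler--Maclaurin to $f(x)=(\frac{4x}{\pi}+\beta_0)x^{i\pi/2}$. Its antiderivative is $\frac{4}{\pi}\frac{x^{2+i\pi/2}}{2+i\pi/2}+\beta_0\frac{x^{1+i\pi/2}}{1+i\pi/2}$, and the remaining terms are $\frac12 f(M)$, $\frac{1}{12}f'(M)$, a constant $C$ (the limit of the regularized sum), and $o(1)$. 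Since all error terms of the form $O(m^{-1})\cdot m^{i\pi/2}$ are summable, they only change $C$. This yields
\[
P_n=C+e^{i\beta}n^{2+i\pi/2}\Bigl(A_0+\tfrac{A_1}{n}+\tfrac{A_2+(-1)^nB}{n^2}\Bigr)+o(1),
\]
where the odd-$n$ case comes from adding one more term $a_{n-1}e^{i\varphi_{n-1}}$. Because the constant $C$ is never computed explicitly, it is simply absorbed into $T$. I would take $T(z)=\lambda(z-C)$ with $|\lambda|=1$, where $\lambda$ is chosen so that the leading term $A_0e^{i\beta}n^{2+i\pi/2}$ lies exactly on $r=e^{4\theta/\pi}$. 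This is possible because a rotation of a logarithmic spiral is also a dilation of it, so scale and rotation are interchangeable here.

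\textbf{Step 3: distance to the spiral.} Write $w_n=T(P_n)=\rho e^{i\theta}$. For large $\rho$ the spiral is locally almost straight, and its radius vector meets the tangent at the angle $\alpha$ with $\cot\alpha=4/\pi$. So the distance equals $|\rho-e^{4\theta/\pi}|\sin\alpha+O(1/\rho)$. Expanding $\log w_n=\log\rho+i\theta$ to order $n^{-2}$, the quantity $\log\rho-\frac{4}{\pi}\theta$ is $n^{-2}$ times a real linear combination of the $A_j/A_0$ and $B/A_0$. The $1/n$ term $A_1/A_0$ contributes only through its square, and the first-order contributions of the $n^{-1}$ term must cancel, since otherwise the distance would grow like $n$. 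Checking this cancellation is a consistency check on the computation. Multiplying by $\rho\approx|A_0|n^2$ and then by $\sin\alpha$ gives a constant plus $(-1)^n\times$const, which should come out to $\frac{17}{24}+\frac{(-1)^n}{8}$.

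\textbf{Main obstacle.} The hard part is the bookkeeping in Steps 2--3. One has to track every contribution of relative order $n^{-2}$: the $-x/3$ term of $\cot$, the $1/(8m^2)$-type terms in the odd harmonic sums, the Euler--Maclaurin $\frac12f$ and $\frac1{12}f'$ corrections, and the parity boundary term. Then one must confirm that these combine into the stated constants. To keep this manageable, I would first treat the even subsequence only and obtain $\frac{5}{6}$. After that, I would obtain the odd case $\frac{7}{12}$ by adding the single extra term $a_{n-1}e^{i\varphi_{n-1}}$, whose projection onto the normal direction gives the difference $\frac14$.
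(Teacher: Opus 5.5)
\textbf{The gap: the $O(m^{-1})$ terms in Step 2.} The step that fails is your claim that terms of the form $O(m^{-1})\,m^{i\pi/2}$ are summable and only shift $C$. They are not summable. One has $\sum_{m\le M} m^{-1+i\pi/2} = \frac{2}{i\pi}M^{i\pi/2} + C' + O(M^{-1})$, a bounded, non-convergent, rotating term. It is of exactly the order of your $A_2$ term, and that term is what determines the distance. Only $O(m^{-2})$ remainders can be absorbed into $C$; this is the role of $S_{-1}(n)$ and equation (6) in the paper.

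Concretely, with the correct pairing (see below),
\[
b_m e^{i\psi_m} = e^{i\beta}\Bigl(\tfrac{4}{\pi}\,m^{1+i\frac{\pi}{2}} + \bigl(-\tfrac{\pi}{3}+\tfrac{i}{12}\bigr)\,m^{-1+i\frac{\pi}{2}} + O(m^{-2})\Bigr).
\]
Here $-\frac{\pi}{3}$ comes from the $-x/3$ term of $\cot$, and $\frac{i}{12}$ comes from the $\frac{\pi}{48m^2}$ term of $\psi_m$ multiplied by $\frac{4m}{\pi}$. Keep the $m^{-1+i\pi/2}$ term and take $n=2M+1$. Then $\frac{\pi}{2}\bigl(1+\frac{\pi}{4}i\bigr)P_{2M+1}\sim_{\circlearrowleft} f(M)$, with $f$ as in Lemma 7 and $a=\frac14$, $b=\frac53$. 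In this normalization Lemma 7 together with Theorem 10 gives distance $\frac12\,|b-\frac12| = \frac{7}{12}$. If you drop the term as Step 2 prescribes, you get $b=\frac13$, i.e.\ distance $\frac{1}{12}$ on the \emph{outer} side.

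The $\frac{i}{12}$ piece only changes $a$, which is a tangential shift; it is the cotangent correction that moves $b$. Your ``main obstacle'' paragraph says these very terms must be tracked, which contradicts Step 2. The missing idea is that they enter through the bounded divergent sum above, not through $C$. You also need $O(1/n)$ rather than $o(1)$ errors throughout to obtain the stated remainder; Euler--Maclaurin does give this.

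\textbf{Off-by-one pairing in Step 1.} The phase increments are $\varphi_k-\varphi_{k-1}=\pi/k$ for odd $k$ and $0$ for even $k$. So $\varphi_{2m+1}=\varphi_{2m}+\frac{\pi}{2m+1}$, and the equal-phase pairs are $(2m-1,2m)$; your $\psi_m$ is in fact $\varphi_{2m-1}=\varphi_{2m}$. With your grouping $(2m,2m+1)$, each $a_{2m+1}$ carries a phase error of size about $a_{2m+1}\cdot\frac{\pi}{2m+1}\approx 1$, and these accumulate to $O(n)$.

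With the correct pairing:
\begin{itemize}
\item $b_m=\frac{4m}{\pi}-\frac{\pi}{3m}+O(m^{-3})$, so $\beta_0=0$.
\item $P_{2M+1}$ is the clean case.
\item The even case needs the extra term $a_{2M+1}e^{i\psi_{M+1}}$.
\end{itemize}

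\textbf{The even case.} A plain normal projection of the extra step gives about $\frac12$, not $\frac14$. The step has length $\approx 2M/\pi$ and makes an angle $\approx\frac{\pi}{4M}$ with the tangent. The spiral's own bending over that length takes back $\frac14$. Equivalently, $P_{2M+2}$ sits near the midpoint of the straight segment $P_{2M+1}P_{2M+3}$, a sagitta $\ell\Delta/8=\frac14$ further inside, where $\ell\approx 4M/\pi$ and $\Delta=\frac{\pi}{2M+1}$. Re-expanding in $s=M+\frac12$ gives $b=\frac{13}{6}$ and distance $\frac56$.

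\textbf{What your route buys once repaired.} Your pairing is a genuine simplification of the paper's argument. It removes the $(-1)^k/(2k)$ term of $h_k$, so the paper's separate analysis of $S_0(n)$ disappears, and the $(-1)^n$ effect becomes a single boundary term.
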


\ind To conveniently handle orientation-preserving isometries and $\mathcal{O}\left({n^{-1}}\right)$ asymptotics, let us define the following equivalence relation.\\[-6pt]

\begin{definition}
    Two sequences of complex numbers $(a_n)$ and $(b_n)$ are said to be equivalent, denoted by $a_n\sim_{\circlearrowleft} b_n$, if there exist constants $\phi \in \mathbb R$ and $c \in \mathbb C$ such that
\[a_n = e^{i\phi}b_n+c+\mathcal{O}\left(\frac1{n}\right).\]
\end{definition}

It is straightforward to verify that $\sim_{\circlearrowleft}$ defines an equivalence relation. Geometrically, $a_n\sim_{\circlearrowleft} b_n$ implies that the two
\vadjust{\vspace{0.5pt}}
sequences are identical up to a rigid transformation, within an error term of order $\mathcal{O}({n^{-1}})$.\\[-5.97pt]

\ind The next section investigates a more tractable sequence equivalent to $P_n$.

\section{Approximating $P_n$}
In this section we will prove that the following relation holds: \\[-1pt]
\[2\pi \left(1+\frac{\pi}{4}i\right) P_n \sim_{\circlearrowleft} \left(n-\frac{1}{2}\right)^{2+i\frac{\pi}{2}} + \left(1+\frac{\pi}{4}i\right)\left(\left(n-\frac{1}{2}\right)^{1+i\frac{\pi}{2}} + \left( \frac{1}{4}+\left(\frac{37}{24}+\frac{(-1)^n}{4}\right)\pi i\right)\left(n-\frac{1}{2}\right)^{i\frac{\pi}{2}}\right).\]
\\[1.5pt]
To prove this, several approximation formulas are required.
\\

\begin{lemma}
\label{lem3}
For the partial sums of the harmonic series, the following expansion holds:
\[H_n = \gamma + \log\left(n+\frac{1}{2}\right) + \frac{1}{24n^2} + \mathcal O\left(\frac{1}{n^3}\right)\]
where $\gamma$ is the Euler-Mascheroni constant.
\end{lemma}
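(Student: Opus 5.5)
The plan is to start from the standard Euler--Maclaurin asymptotic expansion
\[
H_n=\gamma+\log n+\frac{1}{2n}-\frac{1}{12n^2}+\mathcal O\!\left(\frac{1}{n^4}\right),
\]
which we take as known. It can also be derived quickly by applying Euler--Maclaurin to $f(x)=1/x$ on $[1,n]$; the tail of the remainder integral is then $\mathcal O(n^{-4})$. After that, the only thing left to do is rewrite $\log n+\frac{1}{2n}-\frac{1}{12n^2}$ in terms of $\log\left(n+\frac12\right)$.

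For this I would expand
\[
\log\left(n+\frac12\right)=\log n+\log\left(1+\frac{1}{2n}\right)=\log n+\frac{1}{2n}-\frac{1}{8n^2}+\frac{1}{24n^3}+\mathcal O\!\left(\frac{1}{n^4}\right).
\]
Subtracting gives
\[
H_n-\gamma-\log\left(n+\frac12\right)=-\frac{1}{12n^2}+\frac{1}{8n^2}+\mathcal O\!\left(\frac{1}{n^3}\right)=\frac{1}{24n^2}+\mathcal O\!\left(\frac{1}{n^3}\right),
\]
which is exactly the claimed formula. The $1/n$ terms cancel, and that cancellation is the reason for shifting the argument to $n+\tfrac12$.

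As an alternative that avoids quoting Euler--Maclaurin, define $d_n=H_n-\log\left(n+\frac12\right)$. Its increments are
\[
d_n-d_{n-1}=\frac1n-\log\frac{n+1/2}{n-1/2}=\frac1n-2\,\mathrm{artanh}\frac{1}{2n}=-\frac{1}{12n^3}+\mathcal O\!\left(\frac{1}{n^5}\right).
\]
This shows that $d_n$ converges; its limit is $\gamma$, since $\log\left(n+\frac12\right)-\log n\to0$. Summing the tail then gives
\[
d_n-\gamma=\sum_{k>n}\frac{1}{12k^3}+\mathcal O\!\left(\frac{1}{n^4}\right)=\frac{1}{24n^2}+\mathcal O\!\left(\frac{1}{n^3}\right).
\]
The main care point is this final tail estimate, which comes from comparing the sum with $\int_n^\infty dx/(12x^3)$ and accounting for the $\mathcal O(n^{-3})$ comparison error.

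No step here is a serious obstacle. The only things to check are the coefficient bookkeeping in the $n^{-2}$ term and that the error terms are uniform, which they are because every series used converges absolutely for $n\ge1$.
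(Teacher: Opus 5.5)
Your proposal is correct, but it takes a different route from the paper. The paper does not derive the expansion at all. It quotes DeTemple's inequality $\frac{1}{24(n+1)^2}<H_n-\gamma-\log\left(n+\frac12\right)<\frac{1}{24n^2}$, and the lemma follows because the two bounds differ by $\frac{2n+1}{24n^2(n+1)^2}=\mathcal O(n^{-3})$, a step the paper leaves implicit. You instead prove the expansion, in one of two ways. The first uses $H_n=\gamma+\log n+\frac1{2n}-\frac1{12n^2}+\cdots$ together with the series for $\log\left(1+\frac1{2n}\right)$. The second telescopes $d_n-d_{n-1}=-\frac1{12n^3}+\mathcal O(n^{-5})$ and compares the tail with $\int_n^\infty x^{-3}\,dx$. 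The coefficients ($-\frac1{12}+\frac18=\frac1{24}$, and $\frac1{12}\cdot\frac1{2n^2}$) and the signs are right in both. For the first variant, an $\mathcal O(n^{-3})$ remainder in the expansion of $H_n$ is all you need. That remainder follows from the paper's own Lemma 5: take formula (3) with $f(t)=1/t$, and note the tail of the $B_3$-integral is at most $\frac{1}{3n^3}\sup_{[0,1]}|B_3|$. So no external result is required. The citation gives the paper brevity and explicit error constants valid for every $n\ge 1$, while your argument is self-contained. The two are also closely linked. Keep the exact increment $d_{k-1}-d_k=\int_0^{1/2}\frac{2t^2}{k(k^2-t^2)}\,dt$, which lies between $\frac1{12k^3}$ and $\frac1{12k(k^2-1/4)}$. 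Compare these bounds with $\frac1{24}\bigl(\frac1{k^2}-\frac1{(k+1)^2}\bigr)$ and $\frac1{24}\bigl(\frac1{(k-1)^2}-\frac1{k^2}\bigr)$ respectively. Summing over $k>n$ then reproduces DeTemple's two-sided bound, so your second argument is essentially an asymptotic version of the quoted result.
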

\begin{proof}
The proof relies on the following result by DeTemple \cite{detemple1993}:
\[\frac{1}{24(n+1)^2} < H_n - \gamma - \log\left(n+\frac{1}{2}\right) < \frac{1}{24n^2}. \qedhere\]
\end{proof}

\vspace{5pt}

\begin{lemma}
\label{lem4}
For the partial sums of the alternating harmonic series, the following expansion holds:
\[h_n = \log2 + \frac{(-1)^{n-1}}{2n}+\frac{(-1)^{n}}{4n^2}+ \mathcal O\left(\frac{1}{n^3}\right).\]
\end{lemma}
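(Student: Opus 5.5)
We write the tail of the alternating harmonic series as an integral and expand it. Since $\log 2=\sum_{j\ge1}(-1)^{j-1}/j$, we have
\[
\log 2 - h_n=\sum_{j=n+1}^{\infty}\frac{(-1)^{j-1}}{j}=(-1)^n\sum_{m=1}^{\infty}\frac{(-1)^{m-1}}{n+m}=(-1)^n\int_0^1\frac{x^{n}}{1+x}\,dx .
\]
The last equality follows from the geometric series $\frac{1}{1+x}=\sum_{m\ge0}(-x)^m$, integrated termwise; this is justified by dominated or monotone convergence of the alternating partial sums on $[0,1)$. It therefore suffices to show
\[
I_n:=\int_0^1\frac{x^n}{1+x}\,dx=\frac{1}{2n}-\frac{1}{4n^2}+\mathcal O\!\left(\frac1{n^3}\right),
\]
since then $h_n=\log2-(-1)^nI_n=\log2+\frac{(-1)^{n-1}}{2n}+\frac{(-1)^n}{4n^2}+\mathcal O(n^{-3})$.

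To expand $I_n$ we integrate by parts twice with $u=(1+x)^{-1}$ and $dv=x^n\,dx$. The first integration gives
\[
I_n=\frac{1}{2(n+1)}+\frac{1}{n+1}\int_0^1\frac{x^{n+1}}{(1+x)^2}\,dx .
\]
Applying the same step to the remaining integral gives
\[
\int_0^1\frac{x^{n+1}}{(1+x)^2}\,dx=\frac{1}{4(n+2)}+\frac{2}{n+2}\int_0^1\frac{x^{n+2}}{(1+x)^3}\,dx .
\]
The last integral is bounded by $\int_0^1x^{n+2}\,dx=\frac1{n+3}$, so that term is $\mathcal O(n^{-2})$, and after the prefactor $\frac1{n+1}$ it contributes $\mathcal O(n^{-3})$. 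Hence
\[
I_n=\frac{1}{2(n+1)}+\frac{1}{4(n+1)(n+2)}+\mathcal O\!\left(\frac1{n^3}\right).
\]

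Finally we expand in powers of $1/n$. We have $\frac{1}{2(n+1)}=\frac1{2n}-\frac1{2n^2}+\mathcal O(n^{-3})$ and $\frac1{4(n+1)(n+2)}=\frac1{4n^2}+\mathcal O(n^{-3})$. Summing these gives $\frac1{2n}-\frac1{4n^2}+\mathcal O(n^{-3})$, as required.

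The only delicate points are the termwise integration in the integral representation and the sign bookkeeping with $(-1)^n$. The error estimates are elementary because the integrands are positive and bounded by powers of $x$.
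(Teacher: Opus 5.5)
Your proof is correct, but it follows a different route from the paper.

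The paper splits into even and odd $n$. It writes $h_{2k}=H_{2k}-H_k$ and $h_{2k+1}=H_{2k}-H_k+\frac{1}{2k+1}$, then feeds in Lemma~3, i.e.\ DeTemple's expansion $H_n=\gamma+\log(n+\tfrac12)+\frac{1}{24n^2}+\mathcal O(n^{-3})$. The work there is expanding $\log(2k+\tfrac12)-\log(k+\tfrac12)$ and the $\frac{1}{24n^2}$ corrections in each parity class, then rewriting in terms of $n$.

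You instead use the exact tail representation $\log 2-h_n=(-1)^n\int_0^1\frac{x^n}{1+x}\,dx$ and integrate by parts twice. Your computations all check out: the boundary terms $\frac{1}{2(n+1)}$ and $\frac{1}{4(n+2)}$, the bound $\frac{1}{n+3}$ on the last integral, and the final expansion to $\frac{1}{2n}-\frac{1}{4n^2}$.

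What each route buys:
\begin{itemize}
\item The paper's route is shorter in context, since Lemma~3 is already needed elsewhere. However, it relies on an external inequality and requires the parity split.
\item Your route is self-contained, with no appeal to DeTemple or to $\gamma$. It handles both parities at once because $(-1)^n$ factors out exactly, gives explicit error constants, and extends to any order by further integrations by parts.
\end{itemize}

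One small simplification: you can drop the termwise integration of an infinite series entirely. The finite identity $h_n=\int_0^1\frac{1-(-x)^n}{1+x}\,dx$, obtained by summing the geometric series $\sum_{j=1}^{n}(-x)^{j-1}$, gives your representation directly with no convergence argument.
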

\begin{proof}
The proof proceeds by considering the even and odd cases separately, using the following identities:
\[h_{2k}=H_{2k}-H_k,\]
\[h_{2k+1}=H_{2k}-H_k + \frac{1}{2k+1}. \qedhere\]
\end{proof}

\vspace{5pt}

\begin{lemma}
\label{lem5}
Suppose that $f : [m, n] \to \mathbb{C}$ is a $C^3$ function and $B_k(x)$ denotes the $k$-th Bernoulli polynomial. Let $S = \sum_{i=m}^n f(i)$ and $I = \int_m^n f(t) \, dt$ . Then,
\begin{equation} \label{two}
    S - I = \frac{f(m)+f(n)}{2} + \int_m^n f'(t) B_1(t-\lfloor t \rfloor) \ dt
\end{equation}
\begin{equation} \label{three}
    = \frac{f(m)+f(n)}{2} + \frac{f'(n)-f'(m)}{12} + \frac{1}{6}\int_m^n f'''(t) B_3(t-\lfloor t \rfloor) \ dt.
\end{equation}
\end{lemma}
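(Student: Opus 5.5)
We prove the Euler--Maclaurin-type identities by working on each unit interval $[j,j+1]$ for $m\le j\le n-1$ and summing. The periodic function $B_1(t-\lfloor t\rfloor)=t-\lfloor t\rfloor-\tfrac12$ equals $t-j-\tfrac12$ on $[j,j+1)$. Since $f$ is complex-valued, every step below is linear in $f$, so the real and imaginary parts can be handled separately, or the argument run directly over $\mathbb C$. Nothing from the earlier lemmas is needed; only integration by parts and standard facts about Bernoulli polynomials are used.

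For \eqref{two}, integrate by parts on $[j,j+1]$:
\[
\int_j^{j+1} f'(t)\Bigl(t-j-\tfrac12\Bigr)\,dt=\tfrac12\bigl(f(j+1)+f(j)\bigr)-\int_j^{j+1} f(t)\,dt .
\]
The value of the integrand at the single point $t=j+1$ is irrelevant, so using the half-open interval causes no trouble. Summing over $j=m,\dots,n-1$ makes the right side telescope into
\[
\sum_{i=m}^n f(i)-\frac{f(m)+f(n)}{2}-I=S-I-\frac{f(m)+f(n)}{2}.
\]
This is exactly \eqref{two}.

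For \eqref{three}, integrate by parts twice more on the remaining integral. We use $B_{k+1}'=(k+1)B_k$, and write $\tilde B_k(t)=B_k(t-\lfloor t\rfloor)$. For $k\ge2$ the functions $\tilde B_k$ are continuous, because $B_k(0)=B_k(1)$. Hence on each $[j,j+1]$ the boundary terms telescope across the whole interval $[m,n]$.

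The first step gives
\[
\int_m^n f'\tilde B_1\,dt=\tfrac12\Bigl[f'\tilde B_2\Bigr]_m^n-\tfrac12\int_m^n f''\tilde B_2\,dt=\frac{f'(n)-f'(m)}{12}-\tfrac12\int_m^n f''\tilde B_2\,dt,
\]
using $B_2(0)=\tfrac16$.

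The second step gives
\[
\int_m^n f''\tilde B_2\,dt=\tfrac13\Bigl[f''\tilde B_3\Bigr]_m^n-\tfrac13\int_m^n f'''\tilde B_3\,dt=-\tfrac13\int_m^n f'''\tilde B_3\,dt,
\]
since $B_3(0)=0$. Substituting back, the coefficient of the last integral is $(-\tfrac12)(-\tfrac13)=\tfrac16$, which yields \eqref{three}.

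The only delicate point is justifying integration by parts across the jump points of the periodic functions. It is handled by always splitting at integers and checking continuity, namely $B_k(0)=B_k(1)$ for $k\ge2$. The $C^3$ hypothesis guarantees that every integral involved exists.
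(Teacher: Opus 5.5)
Your proof is correct. The integration by parts on each unit interval gives \eqref{two} with the right endpoint weights. The two further integrations by parts then produce exactly the constants $\tfrac{1}{12}$ and $\tfrac16$ in \eqref{three}; they use $B_{k+1}'=(k+1)B_k$, the continuity of $B_k(t-\lfloor t\rfloor)$ for $k\ge 2$, $B_2(0)=\tfrac16$ and $B_3(0)=0$.

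The paper gives no derivation. It identifies the lemma with the Euler--Maclaurin formula of orders 1 and 3 and cites DLMF Eq.~2.10.1. Your route is the standard derivation of that formula, and it buys self-containedness and exact bookkeeping. It shows that \eqref{two} needs only $f\in C^1$ and \eqref{three} only $f\in C^3$, and it fixes the sign and the factor $1/3!$ of the remainder.

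This is a small but real gain. The cited DLMF formula is stated with the even-order remainder $\int_a^n \frac{B_{2m}-\widetilde{B}_{2m}(x)}{(2m)!}f^{(2m)}(x)\,dx$ under a $C^{2m}$ hypothesis. Getting from it to the odd-order forms with $B_1$ and $B_3$ in the lemma takes exactly one more integration by parts of the kind you carry out.

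The citation is shorter and is enough for the paper's purposes. The lemma is only applied to smooth functions such as $(t+\tfrac12)^{1+i\pi/2}$ and $(t+\tfrac12)^{-1+i\pi/2}$.
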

\begin{proof}
This corresponds to the Euler-Maclaurin formula for orders 1 and 3 \cite[Eq.~2.10.1]{olver2010nist}.
\end{proof}

\ind Let us now prove the following.\\

\begin{theorem}
\label{thm6}
The following relation holds:
    \[2\pi \left(1+\frac{\pi}{4}i\right) P_n \sim_{\circlearrowleft} \left(n-\frac{1}{2}\right)^{2+i\frac{\pi}{2}} + \left(1+\frac{\pi}{4}i\right)\left(\left(n-\frac{1}{2}\right)^{1+i\frac{\pi}{2}} + \left( \frac{1}{4}+\left(\frac{37}{24}+\frac{(-1)^n}{4}\right)\pi i\right)\left(n-\frac{1}{2}\right)^{i\frac{\pi}{2}}\right).\]
\end{theorem}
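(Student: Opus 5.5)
We expand each summand of \eqref{one} to order $\mathcal O(k^{-2})$ times a unimodular factor. Any such error is summable, and its tail from $n$ onward is $\mathcal O(n^{-1})$, so the whole error is absorbed into the constant $c$ in the definition of $\sim_{\circlearrowleft}$. After that we compare the resulting sum with the explicit right-hand side via Lemma~\ref{lem5}.

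\textbf{Step 1: expanding the summand.} For the magnitude, use $\cot x = \frac1x - \frac x3 + \mathcal O(x^3)$. This gives
\[\tfrac12\left(\cot\tfrac{\pi}{k}+\cot\tfrac{\pi}{k+1}\right)=\frac{2k+1}{2\pi}-\frac{\pi}{6}\cdot\frac{2k+1}{2k(k+1)}+\mathcal O(k^{-3}),\]
so with $x=k+\tfrac12$ the magnitude is $\frac{x}{\pi}-\frac{\pi}{6x}+\mathcal O(k^{-3})$.

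For the angle, Lemmas~\ref{lem3} and \ref{lem4} give
\[\tfrac{\pi}{2}(H_k+h_k)=\tfrac{\pi}{2}(\gamma+\log 2)+\tfrac{\pi}{2}\log x+\tfrac{\pi}{2}\left(\tfrac{(-1)^{k-1}}{2k}+\tfrac{(-1)^k}{4k^2}+\tfrac{1}{24k^2}\right)+\mathcal O(k^{-3}).\]
Write $\varepsilon_k$ for the bracketed correction. Rewriting $1/k$ in terms of $1/x$ gives $\frac1k=\frac1x+\frac1{2x^2}+\dots$. Then expand $e^{i\frac{\pi}{2}\varepsilon_k}=1+i\frac{\pi}{2}\varepsilon_k-\frac{\pi^2}{8}\varepsilon_k^2+\dots$. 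After dropping the constant phase (absorbed into $e^{i\phi}$), the summand becomes
\[x^{i\pi/2}\left(\frac{x}{\pi}+(-1)^{k-1}\frac{i}{4}+\frac{\beta+\beta'(-1)^k}{x}\right)+\mathcal O(k^{-2}),\]
where $\beta,\beta'$ are explicit constants collecting $-\pi/6$, the $\frac{1}{24}$ term, the $\pm\frac14$ term, the cross term of $\frac{i}{4}$ with $\frac{1}{2x^2}$, and $-\frac{\pi^2}{8}\cdot\frac{1}{4k^2}\cdot\frac{x}{\pi}$.

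\textbf{Step 2: summing.}
\begin{itemize}
\item[(a)] The alternating term $\frac i4\sum (-1)^{k-1}x^{i\pi/2}$: pair consecutive $k$, or equivalently write $(-1)^{k-1}g(k)$ sums as $\frac{(-1)^{n}}{2}g(n)$ plus a convergent part plus $\frac{1}{2}\sum$ of differences. This yields a bounded-oscillation term $\propto(-1)^n (n-\tfrac12)^{i\pi/2}$ plus a smooth part involving $\sum g'$, which is of the $x^{-1+i\pi/2}$ type and therefore contributes to the constant coefficient.
\item[(b)] The $(-1)^k/x$ term sums to a constant plus $\mathcal O(1/n)$, so it can be dropped.
\item[(c)] For the smooth terms $\sum_{k=2}^{n-1}f(k)$ with $f(t)=\frac1\pi(t+\tfrac12)^{1+i\pi/2}$ and $(t+\tfrac12)^{-1+i\pi/2}$, apply Lemma~\ref{lem5}, using \eqref{three} for the first and \eqref{two} for the second. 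The integrals are explicit: $\int x^{1+i\pi/2}=\frac{x^{2+i\pi/2}}{2+i\pi/2}$ evaluated up to $x=n-\tfrac12$, and $\int x^{-1+i\pi/2}=\frac{x^{i\pi/2}}{i\pi/2}$. The endpoint terms $-\frac12 f(n)$-type and $\frac{1}{12}f'(n)$ supply the $(n-\tfrac12)^{1+i\pi/2}$ and $(n-\tfrac12)^{i\pi/2}$ corrections; note the sum ends at $k=n-1$, i.e.\ at $x=n-\tfrac12$. The remainder integrals converge, since $f'''=\mathcal O(t^{-2})$, so their tails are $\mathcal O(1/n)$.
\end{itemize}

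\textbf{Step 3: normalizing.} Multiply by $2\pi(1+\frac{\pi}{4}i)=\pi(2+\frac{\pi}{2}i)$. This normalizes the leading coefficient to $1$ and leaves the factor $(1+\frac{\pi}{4}i)$ on the lower terms, since $\frac{1}{i\pi/2}\cdot\pi(2+\frac{\pi}{2}i)$ is also a multiple of $(1+\frac\pi4 i)$. Finally, match the coefficients $1$, $\frac14+(\frac{37}{24}+\frac{(-1)^n}{4})\pi i$.

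The main obstacle is Step 1 bookkeeping: every contribution to the $x^{i\pi/2}$ coefficient must be tracked exactly. These come from the cot correction, the $\frac{1}{24k^2}$ and $\frac{(-1)^k}{4k^2}$ terms, the squared alternating phase $\varepsilon_k^2$, the shift $1/k\to1/x$, and the Euler–Maclaurin endpoint terms of both the smooth and the alternating sums. I would organize this by writing everything in powers of $x$ with parity-split coefficients, and verify the constant $\frac{37}{24}$ numerically against $P_n$ for moderate $n$.
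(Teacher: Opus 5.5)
Your outline matches the paper's proof: Laurent expansion of $\cot$ plus Lemmas~\ref{lem3}--\ref{lem4}, reduction to sums of $x^{1+i\pi/2}$, $(-1)^k x^{i\pi/2}$ and $x^{-1+i\pi/2}$ with $x=k+\frac12$, then Lemma~\ref{lem5}. But the statement \emph{is} the list of coefficients, and you never compute them. Moreover, two of your ingredients are wrong in ways that change them.

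First, $\frac12\cdot\frac{\pi}{3}\bigl(\frac1k+\frac1{k+1}\bigr)=\frac{\pi}{6}\cdot\frac{2k+1}{k(k+1)}$, not $\frac{\pi}{6}\cdot\frac{2k+1}{2k(k+1)}$. So the modulus of the $k$-th step is $\frac{x}{\pi}-\frac{\pi}{3x}+\mathcal O(x^{-3})$, not $\frac{x}{\pi}-\frac{\pi}{6x}$. With your value the coefficient of $\sum x^{-1+i\pi/2}$ is $-\frac{19\pi}{96}+\frac{i}{48}$ instead of $-\frac{35\pi}{96}+\frac{i}{48}$. After multiplying by $2\pi(1+\frac{\pi}{4}i)$ you would land on $\frac14+\frac{21}{24}\pi i$ rather than $\frac14+\frac{37}{24}\pi i$, even with the alternating sum handled correctly. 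Separately, $\beta'=0$ exactly: the $\frac{(-1)^k}{4k^2}$ of Lemma~\ref{lem4} is precisely the $\frac1k\to\frac1x$ correction, so the phase correction is $\frac{(-1)^{k-1}}{2x}+\frac{1}{24x^2}+\mathcal O(x^{-3})$. This is harmless, since you drop that term anyway.

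Second, your treatment of the alternating sum misplaces its contribution. The decomposition you describe reads
\[
\sum_{k=2}^{n-1}(-1)^{k-1}g(k)=\frac{(-1)^n}{2}g(n)-\frac12 g(2)+\frac12\sum_{k=2}^{n-1}(-1)^{k-1}\bigl(g(k)-g(k+1)\bigr),
\]
and the last sum still carries the sign $(-1)^{k-1}$. Its terms are $\mathcal O(1/k)$ with consecutive differences $\mathcal O(1/k^2)$, so by the same argument as your (b) it is a constant plus $\mathcal O(1/n)$. There is no parity-independent ``smooth part involving $\sum g'$''. Pairing does produce a piece $\frac12 g(n)$ up to a constant, but combined with the unpaired last term (present only for odd $n$) it gives exactly $\frac{(-1)^n}{2}g(n)$. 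Hence the alternating sum contributes only $\frac{i}{4}\cdot\frac{(-1)^n}{2}(n-\frac12)^{i\pi/2}$, i.e.\ the $\frac{(-1)^n}{4}\pi i$ in the bracket, and nothing to $\frac14+\frac{37}{24}\pi i$. Any extra $\pm\frac12\sum g'$ would shift $\frac{37}{24}$ by $\pm\frac14$.

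With both corrections the bookkeeping closes as in the paper. The $\frac{1}{12}f'(n-1)$ endpoint term for $\frac1\pi\sum x^{1+i\pi/2}$ contributes $\frac{1+i\pi/2}{6}$ to the bracket. The sum $\sum x^{-1+i\pi/2}\sim\frac{1}{i\pi/2}(n-\frac12)^{i\pi/2}$ contributes $-4i\bigl(-\frac{35\pi}{96}+\frac{i}{48}\bigr)=\frac1{12}+\frac{35}{24}\pi i$. The total is $\frac14+\frac{37}{24}\pi i$.

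Your summation-by-parts handling of the alternating sum, once the sign is kept, is a fine replacement for the paper's even/odd split with \eqref{three}. A numerical check of $\frac{37}{24}$ is a useful sanity test, but it cannot replace this computation.
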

\begin{proof}
   \[P_n = \sum_{k=2}^{n-1}\frac{1}{2}{\left (\cot\left(\frac{\pi}{k}\right)+\cot{\left(\frac{\pi}{k+1}\right)}\right )}\ e^{i\frac{\pi}{2}\left(H_k + h_k \right )}\]
   
   \[=\sum_{k=2}^{n-1}\frac{1}{2}{\left (\frac{k}{\pi}+\frac{k+1}{\pi}-\frac{1}{3}\frac{\pi}{k}-\frac{1}{3}\frac{\pi}{k+1}+\mathcal O\left(\frac{1}{k^3}\right)\right )}\ e^{{i\frac{\pi}{2}\left(\gamma+\log\left(k+\frac{1}{2}\right) + \frac{1}{24k^2}+ \log2 - \frac{(-1)^k}{2k} +\frac{(-1)^k}{4k^2}+\mathcal O\left(\frac{1}{k^3}\right)\right )}}\]
(by the Laurent expansion of $\cot$ at zero, and Lemmas 3 and 4)
   
   \[=e^{i\frac{\pi}{2}(\gamma+\log2)} \sum_{k=2}^{n-1}\frac{1}{2}{\left (\frac{k}{\pi}+\frac{k+1}{\pi}-\frac{1}{3}\frac{\pi}{k}-\frac{1}{3}\frac{\pi}{k+1}+\mathcal O\left(\frac{1}{k^3}\right)\right )}\ e^{{i\frac{\pi}{2}\left(- \frac{(-1)^k}{2k} + \left(\frac{1}{24} +\frac{(-1)^k}{4}\right)\frac{1}{k^2}+\mathcal O\left(\frac{1}{k^3}\right)\right )}}\left(k+\frac{1}{2}\right)^{i\frac{\pi}{2}}\]
   
   \[=e^{i\frac{\pi}{2}(\gamma+\log2)} \sum_{k=2}^{n-1}{\left (\frac{k+\frac{1}{2}}{\pi}-\frac{\pi}{3}\frac{1}{k+\frac{1}{2}}+\mathcal O\left(\frac{1}{k^2}\right)\right )}\ e^{{i\frac{\pi}{2}\left(- \frac{(-1)^k}{2}\frac{1}{k+\frac{1}{2}} + \frac{1}{24}\frac{1}{\left(k+\frac{1}{2}\right)^2}+\mathcal O\left(\frac{1}{k^3}\right)\right )}}\left(k+\frac{1}{2}\right)^{i\frac{\pi}{2}}\]
   
\begin{multline*}
=e^{i\frac{\pi}{2}(\gamma+\log 2)} 
  \sum_{k=2}^{n-1}
  \left(\frac{k+\frac12}{\pi}-\frac{\pi}{3}\frac{1}{k+\frac12}+\mathcal O\left(\frac{1}{k^2}\right)\right)\left(1+i\frac{\pi}{2} \left(-\frac{(-1)^k}{2(k+\frac12)} + \frac{1}{24(k+\frac12)^2} + \mathcal O\left(\frac{1}{k^3}\right) \right) \right. \\
\left. + \frac12\left(i\frac{\pi}{2}\right)^2 \left(\frac{(-1)^k}{2(k+\frac12)} \right)^2 + \mathcal O\left(\frac{1}{k^3}\right)\right)\left(k+\frac{1}{2}\right)^{i\frac{\pi}{2}}
\end{multline*}
(by the Taylor expansion of $\exp$ at zero)

   \[\sim_{\circlearrowleft} \sum_{k=2}^{n-1}{\left (\frac{k+\frac{1}{2}}{\pi}-\frac{\pi}{3}\frac{1}{k+\frac{1}{2}}\right )}\left(1- \frac{i\pi(-1)^k}{4}\frac{1}{k+\frac{1}{2}} + \left(\frac{i\pi}{48}-\frac{\pi^2}{32}\right)\frac{1}{\left(k+\frac{1}{2}\right)^2}\right)\left(k+\frac{1}{2}\right)^{i\frac{\pi}{2}}\]
   
\begin{equation} \label{four}
    \sim_{\circlearrowleft} \frac{1}{\pi}\sum_{k=2}^{n-1}\left(k+\frac{1}{2}\right)^{1+i\frac{\pi}{2}} -\frac{i}{4}\sum_{k=2}^{n-1}(-1)^k\left(k+\frac{1}{2}\right)^{i\frac{\pi}{2}} + \left(-\frac{35\pi}{96}+\frac{i}{48}\right)\sum_{k=2}^{n-1}\left(k+\frac{1}{2}\right)^{-1+i\frac{\pi}{2}}.
\end{equation}

Set
\[S_1(n)=\sum_{k=2}^{n-1}\left(k+\frac{1}{2}\right)^{1+i\frac{\pi}{2}},\]

\[S_0(n)=\sum_{k=2}^{n-1}(-1)^k\left(k+\frac{1}{2}\right)^{i\frac{\pi}{2}},\]
and
\[S_{-1}(n)=\sum_{k=2}^{n-1}\left(k+\frac{1}{2}\right)^{-1+i\frac{\pi}{2}}.\]

Applying \eqref{three} of Lemma 5 to $S_1(n)$, we obtain

\[S_1(n)=\sum_{k=2}^{n-1}\left(k+\frac{1}{2}\right)^{1+i\frac{\pi}{2}}\]

\[\sim_{\circlearrowleft}\int_2^{n-1}\left(t+\frac{1}{2}\right)^{1+i\frac{\pi}{2}}\ dt+\frac{1}{2}\left(n-\frac{1}{2}\right)^{1+i\frac{\pi}{2}}+\frac{1+i\frac{\pi}{2}}{12}\left(n-\frac{1}{2}\right)^{i\frac{\pi}{2}}\]
\[+\frac{\left(1+i\frac{\pi}{2}\right)\left(i\frac{\pi}{2}\right)\left(-1+i\frac{\pi}{2}\right)}{6}\int_2^{n-1} \left(t-\frac{1}{2}\right)^{-2+i\frac{\pi}{2}} B_3(t-\lfloor t \rfloor) \ dt.\]

Let $M=\sup_{t \in [0,1]}|B_3(t)|$. Then
\[\int_2^{n-1} \left| \left(t-\frac{1}{2}\right)^{-2+i\frac{\pi}{2}} B_3(t-\lfloor t \rfloor) \right| \ dt \leq M\int_2^{n-1} \left(t-\frac{1}{2}\right)^{-2} \ dt\leq 0.4M,\]
which implies the integral $\int_2^{n-1} \left(t-\frac{1}{2}\right)^{-2+i\frac{\pi}{2}} B_3(t-\lfloor t \rfloor) \ dt$ converges as $n \to \infty$. The rate of convergence is $\mathcal{O}(n^{-1})$, since
\[\left | \int_{n-1}^{\infty} \left(t-\frac{1}{2}\right)^{-2+i\frac{\pi}{2}} B_3(t-\lfloor t \rfloor) \ dt \right | \leq \int_{n-1}^{\infty} \left | \left(t-\frac{1}{2}\right)^{-2+i\frac{\pi}{2}} B_3(t-\lfloor t \rfloor) \right | \ dt\]

\[\leq M\int_{n-1}^{\infty} \left(t-\frac{1}{2}\right)^{-2} \ dt=\frac{M}{n-\frac{3}{2}}=\mathcal O\left(\frac{1}{n}\right).\]

Hence
\[\int_2^{n-1} \left(t-\frac{1}{2}\right)^{-2+i\frac{\pi}{2}} B_3(t-\lfloor t \rfloor) \ dt \sim_{\circlearrowleft} 0\]

and it follows that
\begin{equation} \label{five}
    S_1(n)\sim_{\circlearrowleft} \frac{1}{2+i\frac{\pi}{2}}\left(n-\frac{1}{2}\right)^{2+i\frac{\pi}{2}}+\frac{1}{2}\left(n-\frac{1}{2}\right)^{1+i\frac{\pi}{2}}+\frac{1+i\frac{\pi}{2}}{12}\left(n-\frac{1}{2}\right)^{i\frac{\pi}{2}}.
\end{equation}

Analogously, use \eqref{two} for $S_{-1}(n)$, we have
\[S_{-1}(n) \sim_{\circlearrowleft} \int_2^{n-1}\left(t+\frac{1}{2}\right)^{-1+i\frac{\pi}{2}}\ dt+\frac{1}{2}\left(n-\frac{1}{2}\right)^{-1+i\frac{\pi}{2}}+
\left(-1+i\frac{\pi}{2}\right)\int_2^{n-1} \left(t+\frac{1}{2}\right)^{-2+i\frac{\pi}{2}} B_1(t-\lfloor t \rfloor) \ dt\]
\begin{equation} \label{six}
    \sim_{\circlearrowleft} \frac{1}{i\frac{\pi}{2}}\left(n-\frac{1}{2}\right)^{i\frac{\pi}{2}}.
\end{equation}

A separate approach is required to simplify $S_0(n)$.

Let $n=2m+1$. Then,

\[S_0(2m+1)=\sum_{k=2}^{2m}(-1)^k\left(k+\frac{1}{2}\right)^{i\frac{\pi}{2}}\]

\[= \sum_{k=1}^m \left(2k+\frac{1}{2}\right)^{i\frac{\pi}{2}}-\sum_{k=2}^m \left((2k-1)+\frac{1}{2}\right)^{i\frac{\pi}{2}}\]

\[= \sum_{k=1}^m \left(2k+\frac{1}{2}\right)^{i\frac{\pi}{2}}-\sum_{k=2}^m \left(2k-\frac{1}{2}\right)^{i\frac{\pi}{2}}\]

\[\sim_{\circlearrowleft} \int_1^m \left(2t+\frac{1}{2}\right)^{i\frac{\pi}{2}}dt +\frac{1}{2}\left(2m+\frac{1}{2}\right)^{i\frac{\pi}{2}}- \int_2^m \left(2t-\frac{1}{2}\right)^{i\frac{\pi}{2}}dt - \frac{1}{2}\left(2m-\frac{1}{2}\right)^{i\frac{\pi}{2}}\]

\[\sim_{\circlearrowleft} \frac{1}{2}\frac{1}{1+i\frac{\pi}{2}}\left(\left(2m+\frac{1}{2}\right)^{1+i\frac{\pi}{2}}-\left(2m-\frac{1}{2}\right)^{1+i\frac{\pi}{2}}\right)  +\frac{1}{2}\left(\left(2m+\frac{1}{2}\right)^{i\frac{\pi}{2}}-\left(2m-\frac{1}{2}\right)^{i\frac{\pi}{2}}\right)\]

\[= \frac{1}{2}\frac{1}{1+i\frac{\pi}{2}}\left(\left(n-\frac{1}{2}\right)^{1+i\frac{\pi}{2}}-\left(n-\frac{3}{2}\right)^{1+i\frac{\pi}{2}}\right)  +\frac{1}{2}\left(\left(n-\frac{1}{2}\right)^{i\frac{\pi}{2}}-\left(n-\frac{3}{2}\right)^{i\frac{\pi}{2}}\right)\]

(by \eqref{three}). Similarly, for $n=2m$,
\[S_0(2m)=\sum_{k=2}^{2m-1}(-1)^k\left(k+\frac{1}{2}\right)^{i\frac{\pi}{2}}\]

\[= \sum_{k=1}^{m-1} \left(2k+\frac{1}{2}\right)^{i\frac{\pi}{2}}-\sum_{k=2}^m \left((2k-1)+\frac{1}{2}\right)^{i\frac{\pi}{2}}\]

\[\sim_{\circlearrowleft} \frac{1}{2}\frac{1}{1+i\frac{\pi}{2}}\left(\left(2(m-1)+\frac{1}{2}\right)^{1+i\frac{\pi}{2}}-\left(2m-\frac{1}{2}\right)^{1+i\frac{\pi}{2}}\right)  +\frac{1}{2}\left(\left(2(m-1)+\frac{1}{2}\right)^{i\frac{\pi}{2}}-\left(2m-\frac{1}{2}\right)^{i\frac{\pi}{2}}\right)\]

\[= \frac{1}{2}\frac{1}{1+i\frac{\pi}{2}}\left(\left(n-\frac{3}{2}\right)^{1+i\frac{\pi}{2}}-\left(n-\frac{1}{2}\right)^{1+i\frac{\pi}{2}}\right)  +\frac{1}{2}\left(\left(n-\frac{3}{2}\right)^{i\frac{\pi}{2}}-\left(n-\frac{1}{2}\right)^{i\frac{\pi}{2}}\right).\]

\vspace{1pt}
Let $h(t)=t^{1+i\frac{\pi}{2}}-(t-1)^{1+i\frac{\pi}{2}}$. Then for sufficiently large real $t$,
\vspace{1pt}
\[h(t)=t^{1+i\frac{\pi}{2}}\left(1-\left(1-\frac{1}{t}\right)^{1+i\frac{\pi}{2}}\right)\]

\[=t^{1+i\frac{\pi}{2}}\left(1-\left(1-\left(1+i\frac{\pi}{2}\right)\frac{1}{t}+\mathcal O\left(\frac{1}{t^2}\right)\right)\right)\]
(by the binomial series)

\[=\left(1+i\frac{\pi}{2}\right)t^{i\frac{\pi}{2}}+\mathcal O\left(\frac{1}{t}\right).\]

Likewise, one can show that $t^{i\frac{\pi}{2}}-(t-1)^{i\frac{\pi}{2}}=\mathcal{O}({t^{-1}})$. Thus,

\begin{equation} \label{seven}
    S_0(n) \sim_{\circlearrowleft} -\frac{(-1)^n}{2}\left(n-\frac{1}{2}\right)^{i\frac{\pi}{2}}.
\end{equation}

Consequently, applying \eqref{five}, \eqref{six}, and \eqref{seven} to \eqref{four},
\[P_n\sim_{\circlearrowleft} \frac{1}{\pi}S_1(n) -\frac{i}{4}S_0(n) + \left(-\frac{35\pi}{96}+\frac{i}{48}\right)S_{-1}(n)\]

\[\sim_{\circlearrowleft} \frac{1}{\pi}\left( \frac{1}{2+i\frac{\pi}{2}}\left(n-\frac{1}{2}\right)^{2+i\frac{\pi}{2}}+\frac{1}{2}\left(n-\frac{1}{2}\right)^{1+i\frac{\pi}{2}}+\frac{1+i\frac{\pi}{2}}{12}\left(n-\frac{1}{2}\right)^{i\frac{\pi}{2}}\right)\]
\[+\frac{i}{4}\frac{(-1)^n}{2}\left(n-\frac{1}{2}\right)^{i\frac{\pi}{2}} + \left(-\frac{35\pi}{96}+\frac{i}{48}\right)\frac{1}{i\frac{\pi}{2}}\left(n-\frac{1}{2}\right)^{i\frac{\pi}{2}}\]

\[\sim_{\circlearrowleft}\frac{1}{2\pi}\frac{1}{1+i\frac{\pi}{4}}\left(n-\frac{1}{2}\right)^{2+i\frac{\pi}{2}} + \frac{1}{2\pi}\left(n-\frac{1}{2}\right)^{1+i\frac{\pi}{2}} + \left( \frac{1}{8\pi}+\left(\frac{37}{48}+\frac{(-1)^n}{8}\right) i\right)\left(n-\frac{1}{2}\right)^{i\frac{\pi}{2}}\]

and it follows that
\[2\pi \left(1+\frac{\pi}{4}i\right) P_n \sim_{\circlearrowleft} \left(n-\frac{1}{2}\right)^{2+i\frac{\pi}{2}} + \left(1+\frac{\pi}{4}i\right)\left(\left(n-\frac{1}{2}\right)^{1+i\frac{\pi}{2}} + \left( \frac{1}{4}+\left(\frac{37}{24}+\frac{(-1)^n}{4}\right)\pi i\right)\left(n-\frac{1}{2}\right)^{i\frac{\pi}{2}}\right).\]
\end{proof}

\section{The Logarithmic Spiral}

We now investigate a spiral curve that maintains an asymptotic distance from $T(P_n)$ for a suitable isometry $T$.\\[-5pt]

\begin{lemma}
\label{lem7}
   Let $t>0$ and $a, b \in \mathbb{R}$. Define
\vspace{1pt}
    \begin{equation} \label{nine}
        f(t) =  t^{2+i\frac{\pi}{2}} + \left(1+\frac{\pi}{4}i\right)\left(t^{1+i\frac{\pi}{2}} + \left( a +b\frac{\pi}{4}i\right)t^{i\frac{\pi}{2}}\right).
    \end{equation}
 
    Then, the following limit holds with a convergence rate of $\mathcal{O}(t^{-1})$:

\vspace{-4pt}
    
     \[\lim_{t \to \infty} \left ( |f(t)| - e^{\frac{4}{\pi}\arg f(t)} \right ) = \left( \frac{1}{2} -b\right)\left( 1+\frac{\pi^2}{16}\right)\]

where $\arg f(t)$ is the continuous branch satisfying $\arg f(t) = \frac{\pi}{2} \log t +o(1)$ as $t \to \infty$.
\end{lemma}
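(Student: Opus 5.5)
The plan is to factor out the dominant term $t^{2+i\frac{\pi}{2}}$ and reduce the statement to a second-order expansion of a logarithm. Set $c=1+\frac{\pi}{4}i$ and $d=c\left(a+b\frac{\pi}{4}i\right)$. Then
\[f(t)=t^{2+i\frac{\pi}{2}}\left(1+u(t)\right),\qquad u(t)=\frac{c}{t}+\frac{d}{t^2}.\]
For large $t$ we have $|u(t)|<1$, so we can put $L(t)=\log(1+u(t))$ with the principal branch. Then $\frac{\pi}{2}\log t+\operatorname{Im}L(t)$ is a continuous argument of $f(t)$, and it equals $\frac{\pi}{2}\log t+o(1)$. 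Hence it is exactly the branch prescribed in the statement. This gives
\[|f(t)|=t^2e^{\operatorname{Re}L(t)},\qquad e^{\frac{4}{\pi}\arg f(t)}=t^2e^{\frac{4}{\pi}\operatorname{Im}L(t)}.\]
The factor $e^{\frac{4}{\pi}\cdot\frac{\pi}{2}\log t}=t^2$ is precisely why the exponent $\frac{4}{\pi}$ appears.

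Next I expand $L$ by the logarithmic series:
\[L(t)=\frac{c}{t}+\frac{d-\frac{c^2}{2}}{t^2}+\mathcal O\left(\frac{1}{t^3}\right).\]
The key observation is that the first-order terms coincide, since $\operatorname{Re}c=1=\frac{4}{\pi}\operatorname{Im}c$. Write $\operatorname{Re}L=\frac1t+\frac{X}{t^2}+\mathcal O(t^{-3})$ and $\frac{4}{\pi}\operatorname{Im}L=\frac1t+\frac{Y}{t^2}+\mathcal O(t^{-3})$, where $X=\operatorname{Re}\left(d-\frac{c^2}{2}\right)$ and $Y=\frac4\pi\operatorname{Im}\left(d-\frac{c^2}{2}\right)$. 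Then
\[e^{\operatorname{Re}L}-e^{\frac4\pi\operatorname{Im}L}=e^{1/t}\left(\frac{X-Y}{t^2}+\mathcal O\left(\frac{1}{t^3}\right)\right).\]
Multiplying by $t^2$ gives $|f(t)|-e^{\frac{4}{\pi}\arg f(t)}=X-Y+\mathcal O(t^{-1})$, which yields both the limit and the rate.

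It remains to compute $X-Y$, which is routine arithmetic.
\begin{itemize}
\item From $c^2=1-\frac{\pi^2}{16}+\frac{\pi}{2}i$, the $-\frac{c^2}{2}$ part contributes $-\frac12\left(1-\frac{\pi^2}{16}\right)+1=\frac12\left(1+\frac{\pi^2}{16}\right)$.
\item From $d=a-b\frac{\pi^2}{16}+\frac{\pi}{4}(a+b)i$, the $d$ part contributes $a-b\frac{\pi^2}{16}-(a+b)=-b\left(1+\frac{\pi^2}{16}\right)$.
\end{itemize}
The sum is $\left(\frac12-b\right)\left(1+\frac{\pi^2}{16}\right)$, as claimed.

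The only delicate point is the bookkeeping of the error terms. I must make sure the $\mathcal O(t^{-3})$ remainders in $L$, multiplied by $t^2$, really yield $\mathcal O(t^{-1})$. Uniform bounds for $\log(1+u)$ and $e^x$ on small $|u|$ and $|x|$ suffice for this. I must also justify the choice of branch, which I handled above by noting that $\operatorname{Im}L\to0$.
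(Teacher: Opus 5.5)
Your proposal is correct and follows essentially the same route as the paper: factor out $t^{2+i\frac{\pi}{2}}$, expand the correction to second order in $1/t$, and note that the $t^2+t$ parts of $|f(t)|$ and $e^{\frac{4}{\pi}\arg f(t)}$ cancel, leaving $\left(\frac12-b\right)\left(1+\frac{\pi^2}{16}\right)$. The only difference is in packaging: you expand the single quantity $\log(1+u)$ and read off its real and imaginary parts, whereas the paper expands $\sqrt{1+2\Re(\epsilon)+|\epsilon|^2}$ and $\arctan\bigl(\Im(\epsilon)/(1+\Re(\epsilon))\bigr)$ separately, arriving at the same intermediate expansions $t^2+t+a-\frac{\pi^2}{16}b+\frac{\pi^2}{32}$ and $t^2+t+a+b-\frac12$.
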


\begin{proof}
    Let $\epsilon(t)= \left(1+\frac{\pi}{4}i\right) \left(\frac{1}{t}+\frac{a+b\frac{\pi}{4}i}{t^2}\right)$. Note that $\epsilon(t)=\mathcal{O}(t^{-1})$, and we can write
    
    \[f(t)=t^{2+i\frac{\pi}{2}}(1+\epsilon).\]
    Thus,
    \vspace{5pt}
    \[|f(t)|=t^2\sqrt{1+2\Re(\epsilon)+|\epsilon|^2}\]
    
    \[=t^2\left(1+\Re(\epsilon)+\frac{|\epsilon|^2}{2}-\frac{\Re(\epsilon)^2}{2}+\mathcal O\left(\epsilon^3\right)\right)\]
(by the binomial series)
    
    \[=t^2\left(1+\Re(\epsilon)+\frac{|\epsilon|^2}{2}-\frac{\Re(\epsilon)^2}{2}\right)+\mathcal O\left(\frac{1}{t}\right).\]
    
    Since
    \[\Re(\epsilon)=\frac{1}{t}+\frac{a-\frac{\pi^2}{16}b}{t^2}+\mathcal O\left(\frac{1}{t^3}\right) \quad \text{and} \quad |\epsilon|^2=\epsilon\bar{\epsilon}=\frac{1+\frac{\pi^2}{16}}{t^2}+\mathcal O\left(\frac{1}{t^3}\right),\]
    
    it follows that
    \[|f(t)|=t^2\left(1+\frac{1}{t}+\frac{1}{t^2}\left(a-\frac{\pi^2}{16}b+\frac{\pi^2}{32}\right)\right)+\mathcal O\left(\frac{1}{t}\right)\]
    \[=t^2+t+a-\frac{\pi^2}{16}b+\frac{\pi^2}{32}+\mathcal O\left(\frac{1}{t}\right).\]

    On the other hand, for sufficiently large $t$ such that $|\epsilon|<1$, we have
    
    \[\arg f(t) = \frac{\pi}{2}\log t + \mathrm{Arg} (1+\epsilon)\]
    
    \[=\frac{\pi}{2}\log t + \arctan \left(\frac{\Im(\epsilon)}{1+\Re(\epsilon)}\right)\]
    
    \[=\frac{\pi}{2}\log t + \arctan \left({\Im(\epsilon)}(1-\Re(\epsilon))+ \mathcal{O}(\epsilon^3) \right)\]
(by the series expansion of $\frac{1}{1+x}$ at zero)
    \[=\frac{\pi}{2}\log t +\Im(\epsilon)(1-\Re(\epsilon))+\mathcal{O}(\epsilon^3)\]
(by the series expansion of $\arctan$ at zero)
    
    \[=\frac{\pi}{2}\log t +\Im(\epsilon)-\Re(\epsilon)\Im(\epsilon)+\mathcal{O}(\epsilon^3)\]
    
    \[=\frac{\pi}{2}\log t +\frac{\pi}{4t}+\frac{\pi(a+b-1)}{4t^2}+\mathcal O\left(\frac{1}{t^3}\right).\]
    
    Consequently,\\[6pt]
    \[e^{\frac{4}{\pi}\arg f(t)}=e^{2\log t +\frac{1}{t}+\frac{(a+b-1)}{t^2}+\mathcal O\left(\frac{1}{t^3}\right)}\]
    
    \[=t^2\left(1+\frac{1}{t}+\frac{a+b-1}{t^2}+\frac{1}{2t^2}\right)+\mathcal O\left(\frac{1}{t}\right)\]
    
    \[=t^2+t+a+b-\frac{1}{2}+\mathcal O\left(\frac{1}{t}\right).\]
    
    Finally, subtracting the two results yields
    \[|f(t)| - e^{\frac{4}{\pi}\arg f(t)} = -\frac{\pi^2}{16}b+\frac{\pi^2}{32}-b+\frac{1}{2}+\mathcal O\left(\frac{1}{t}\right)\]
    \begin{equation*}
        =\left( \frac{1}{2} -b\right)\left( 1+\frac{\pi^2}{16}\right)+\mathcal O\left(\frac{1}{t}\right),
    \end{equation*}
    
    which completes the proof.
\end{proof}

\begin{corollary}
\label{cor8}
    The distance between $f(n-\frac{1}{2})$ and the curve $r=e^{\frac{4}{\pi}\theta}-\left(b- \frac{1}{2}\right)\left( 1+\frac{\pi^2}{16}\right)$ is $\mathcal O(n^{-1})$.
\end{corollary}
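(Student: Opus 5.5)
Write $t=n-\frac12$ and $c=\left(b-\frac12\right)\left(1+\frac{\pi^2}{16}\right)$, and let $\theta_n=\arg f(t)$ be the continuous branch fixed in Lemma 7. The candidate nearby point on the curve $r=e^{\frac4\pi\theta}-c$ is the point with the same polar angle as $f(t)$, namely
\[Q_n=\left(e^{\frac4\pi\theta_n}-c\right)e^{i\theta_n}.\]
The curve is a spiral with many branches, but the branch at angle $\theta_n$ is the one to use, since this choice matches the branch convention of Lemma 7. Because $f(t)=|f(t)|e^{i\theta_n}$, both $f(t)$ and $Q_n$ lie on the same ray from the origin, so
\[|f(t)-Q_n|=\left||f(t)|-e^{\frac4\pi\theta_n}+c\right|.\]

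By Lemma 7, $|f(t)|-e^{\frac4\pi\theta_n}=\left(\frac12-b\right)\left(1+\frac{\pi^2}{16}\right)+\mathcal O(t^{-1})=-c+\mathcal O(t^{-1})$. Substituting this in gives $|f(t)-Q_n|=\mathcal O(t^{-1})$.

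The distance from $f(t)$ to the curve is at most the distance to the particular point $Q_n$ on the curve. Since $t=n-\frac12$, we have $\mathcal O(t^{-1})=\mathcal O(n^{-1})$, which gives the claim.

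The only subtle point is making sure $Q_n$ genuinely lies on the curve, that is, that the radius is written consistently with the angle branch. For large $t$ we have $e^{\frac4\pi\theta_n}\approx t^2\to\infty$, so $r=e^{\frac4\pi\theta_n}-c$ is eventually positive and the point $(r,\theta_n)$ is a legitimate point of the curve in polar form. This only needs to hold for $n$ large, which is all the $\mathcal O$ statement requires; for finitely many small $n$ the bound can be absorbed into the constant.
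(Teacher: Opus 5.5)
Your proposal is correct and follows the paper's argument. The paper observes that $f(n-\tfrac12)$ lies on the shifted curve $r=e^{\frac4\pi\theta}-\left(b-\tfrac12\right)\left(1+\tfrac{\pi^2}{16}\right)+c_n$ with $c_n=\mathcal O(n^{-1})$, and bounds the distance by $|c_n|$, which is exactly the radial gap to your same-angle point $Q_n$; your explicit choice of $Q_n$ and your positivity check just make that step more transparent.
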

\begin{proof}
    Since
    \[|f(n-\frac{1}{2})| - e^{\frac{4}{\pi}\arg f(n-\frac{1}{2})} = \left( \frac{1}{2} -b\right)\left( 1+\frac{\pi^2}{16}\right)+\mathcal O\left(\frac{1}{n}\right),\]
    it follows that each $f(n-\frac{1}{2})$ lies on a curve
    \[r=e^{\frac{4}{\pi}\theta}-\left(b- \frac{1}{2}\right)\left( 1+\frac{\pi^2}{16}\right)+c_n\]
    for some $c_n = \mathcal O(n^{-1})$. Hence the distance from $f(n-\frac{1}{2})$ to the curve $r=e^{\frac{4}{\pi}\theta}-\left(b- \frac{1}{2}\right)\left( 1+\frac{\pi^2}{16}\right)$ is bounded by $|c_n|$, and thus is $\mathcal O(n^{-1})$.
\end{proof}

\vspace{4pt}

\ind We are now ready to prove the following result.\\[-4pt]

\begin{theorem}
\label{thm9}
There exists an orientation-preserving isometry $T$ of the plane such that the distances from $T(P_{2n})$ and $T(P_{2n+1})$ to the curves
\vspace{-6pt}

\[
r = e^{\frac{4}{\pi}\theta} - \frac{10}{3\pi}\sqrt{1+\frac{\pi^2}{16}}
\quad \text{and} \quad
r = e^{\frac{4}{\pi}\theta} - \frac{7}{3\pi}\sqrt{1+\frac{\pi^2}{16}},
\]

respectively, are $\mathcal{O}(n^{-1})$ as $n \to \infty$.
\end{theorem}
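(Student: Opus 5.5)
The plan is to read Theorem~\ref{thm9} off Theorem~\ref{thm6} and Corollary~\ref{cor8}, then undo the complex scaling factor $2\pi(1+\frac{\pi}{4}i)$ using the self-similarity of the logarithmic spiral.

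First, I put the right-hand side of Theorem~\ref{thm6} into the form \eqref{nine} with $t=n-\frac12$. Matching coefficients gives $a=\frac14$ and
\[
b\,\frac{\pi}{4}=\left(\frac{37}{24}+\frac{(-1)^n}{4}\right)\pi,
\qquad\text{so}\qquad
b=\frac{37}{6}+(-1)^n .
\]
This gives $b=\frac{43}{6}$ for even $n$ and $b=\frac{31}{6}$ for odd $n$. Hence $(b-\frac12)(1+\frac{\pi^2}{16})$ equals $\frac{20}{3}(1+\frac{\pi^2}{16})$ for even $n$ and $\frac{14}{3}(1+\frac{\pi^2}{16})$ for odd $n$.

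It is important that Theorem~\ref{thm6} is a single equivalence for the whole sequence, with $(-1)^n$ built into $f$. So one pair $\phi\in\mathbb R$, $c\in\mathbb C$ works for both parities:
\[
2\pi\left(1+\tfrac{\pi}{4}i\right)P_n=e^{i\phi}f_n\!\left(n-\tfrac12\right)+c+\mathcal O\!\left(\tfrac1n\right),
\]
where $f_n$ denotes $f$ with the parity-dependent $b$. Since $f$ depends on $n$ only through two values of $b$, Corollary~\ref{cor8} applies separately along even and odd $n$. It shows that $f_n(n-\frac12)$ lies within $\mathcal O(n^{-1})$ of the curve $r=e^{\frac4\pi\theta}-C_\pm$, with $C_+=\frac{20}{3}(1+\frac{\pi^2}{16})$ and $C_-=\frac{14}{3}(1+\frac{\pi^2}{16})$.

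Next, write $2\pi(1+\frac{\pi}{4}i)=\rho e^{i\psi}$ with $\rho=2\pi\sqrt{1+\frac{\pi^2}{16}}$, and set $\lambda=1/\rho$. Then
\[
P_n=\lambda e^{i(\phi-\psi)}f_n\!\left(n-\tfrac12\right)+\lambda e^{-i\psi}c+\mathcal O\!\left(\tfrac1n\right).
\]
Let $T_0(z)=e^{-i(\phi-\psi)}\bigl(z-\lambda e^{-i\psi}c\bigr)$, an orientation-preserving isometry. Then $T_0(P_n)$ is within $\mathcal O(n^{-1})$ of $\lambda$ times a point within $\mathcal O(n^{-1})$ of $r=e^{\frac4\pi\theta}-C_\pm$. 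The dilation by $\lambda$ maps that curve onto $r=\lambda e^{\frac4\pi\theta}-\lambda C_\pm$, and it scales distances by the constant $\lambda$, so the error stays $\mathcal O(n^{-1})$.

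The key step is to remove the dilation. Because $\lambda e^{\frac4\pi\theta}=e^{\frac4\pi(\theta+\frac{\pi}{4}\log\lambda)}$, the rotation $R$ by angle $\frac{\pi}{4}\log\lambda$ about the origin maps $r=\lambda e^{\frac4\pi\theta}-\lambda C_\pm$ onto $r=e^{\frac4\pi\theta}-\lambda C_\pm$. This works at the level of point sets, with $\theta$ ranging over all reals, so branch choices do not matter. Taking $T=R\circ T_0$, one isometry serves both parities. Finally I compute
\[
\lambda C_+=\frac{20}{3}\cdot\frac{1+\frac{\pi^2}{16}}{2\pi\sqrt{1+\frac{\pi^2}{16}}}=\frac{10}{3\pi}\sqrt{1+\frac{\pi^2}{16}},
\qquad
\lambda C_-=\frac{7}{3\pi}\sqrt{1+\frac{\pi^2}{16}},
\]
and replace $n$ by $2n$ and $2n+1$, which keeps errors $\mathcal O(n^{-1})$.

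I expect the main obstacle to be checking that the dilation-to-rotation trick is legitimate for the shifted curve $r=e^{\frac4\pi\theta}-C$, rather than only for the pure spiral. The dilation scales $r$ by $\lambda$ while leaving $\theta$ unchanged, and only afterwards do we reparametrize $\theta$; writing this in polar coordinates settles it. A secondary point to verify is that the parity-dependent $b$ does not break the uniformity of the $\mathcal O$-constants in Corollary~\ref{cor8}, which holds because $b$ takes only two values.
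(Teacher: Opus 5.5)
Your proposal is correct and follows the paper's proof essentially step for step. You identify $b=\frac{43}{6}$ and $b=\frac{31}{6}$, use the single isometry furnished by Theorem~\ref{thm6} together with Corollary~\ref{cor8}, and then absorb the dilation by $\lambda=1/(2\pi\sqrt{1+\pi^2/16})$ into a rotation using the self-similarity of the spiral; your composite $T=R\circ T_0$ is exactly the map $T(z)=\bigl(2\pi\sqrt{1+\frac{\pi^2}{16}}\bigr)^{-1-i\frac{\pi}{4}}\,T'\bigl(2\pi(1+\frac{\pi}{4}i)z\bigr)$ of \eqref{eight}.
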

\begin{proof}
        
Let
\vspace{-2pt}
\begin{align*}
    g_{e}(t) &= t^{2+i\frac{\pi}{2}} + \left(1+\frac{\pi}{4}i\right)\left(t^{1+i\frac{\pi}{2}} + \left( \frac{1}{4}+\frac{43}{24}\pi i\right)t^{i\frac{\pi}{2}}\right), \\[1pt]
    g_{o}(t) &= t^{2+i\frac{\pi}{2}} + \left(1+\frac{\pi}{4}i\right)\left(t^{1+i\frac{\pi}{2}} + \left( \frac{1}{4}+\frac{31}{24}\pi i\right)t^{i\frac{\pi}{2}}\right),
\end{align*}

which are the cases of \eqref{nine} with $b=\frac{43}{6}$ and $b=\frac{31}{6}$, respectively. Then, by Lemma 7, $g_e(n-\frac{1}{2})$ and $g_o(n-\frac{1}{2})$ satisfy
\begin{align*}
    |g_e(n-\tfrac{1}{2})| - e^{\frac{4}{\pi}\arg g_e(n-\frac{1}{2})} &= -\frac{20}{3}\left( 1+\frac{\pi^2}{16}\right)+\mathcal O\left(\frac{1}{n}\right), \\[2pt]
    |g_o(n-\tfrac{1}{2})| - e^{\frac{4}{\pi}\arg g_o(n-\frac{1}{2})} &= -\frac{14}{3}\left( 1+\frac{\pi^2}{16}\right)+\mathcal O\left(\frac{1}{n}\right),
\end{align*}
respectively. Also, by Theorem 6,
    
    \[2\pi \left(1+\frac{\pi}{4}i\right) P_n \sim_{\circlearrowleft} \begin{cases} 
        g_{e}\left(n-\frac{1}{2}\right) & \text{if $n$ is even} \\[4pt]
        g_{o}\left(n-\frac{1}{2}\right) & \text{if $n$ is odd}
     \end{cases}\]

which implies that
\vspace{5pt}
\[T'\left( 2\pi \left(1+\frac{\pi}{4}i\right) P_n \right) = \begin{cases} 
       g_{e}\left(n-\frac{1}{2}\right) + \mathcal O\left(\frac{1}{n}\right) & \text{if $n$ is even} \\[4pt]
        g_{o}\left(n-\frac{1}{2}\right)+\mathcal O\left(\frac{1}{n}\right) &
        \text{if $n$ is odd}
     \end{cases}\]

holds for some orientation-preserving isometry $T'$ of the (complex) plane.
\\

Suppose that $n$ is even. Then $T'( 2\pi \left(1+\frac{\pi}{4}i\right) P_n )$ is within distance $\mathcal O(n^{-1})$ of $g_{e}(n-\frac{1}{2})$. Since $g_{e}(n-\frac{1}{2})$ is within distance $\mathcal O(n^{-1})$ of the point on $r=e^{\frac{4}{\pi}\theta}-\frac{20}{3}( 1+\frac{\pi^2}{16})$ by Corollary 8, the shortest distance from $T'( 2\pi \left(1+\frac{\pi}{4}i\right) P_n )$ to the curve $r=e^{\frac{4}{\pi}\theta}-\frac{20}{3}( 1+\frac{\pi^2}{16})$ is $\mathcal O(n^{-1})$. Therefore, the point

\[\frac{1}{2\pi\sqrt{1+\frac{\pi^2}{16}}} \ T'\left( 2\pi \left(1+\frac{\pi}{4}i\right) P_n \right)\]

is within distance $\mathcal O(n^{-1})$ of the curve

\[2\pi\sqrt{1+\frac{\pi^2}{16}} \ r=e^{\frac{4}{\pi}\theta}-\frac{20}{3}\left( 1+\frac{\pi^2}{16}\right)\]

\[\iff r=e^{\frac{4}{\pi}\left(\theta-\frac{\pi}{4}\log\left(2\pi\sqrt{1+\frac{\pi^2}{16}}\right)\right)}-\frac{10}{3\pi}\sqrt{1+\frac{\pi^2}{16}}.\]

Thus, by using the isometry of $\mathbb C$ defined as

\begin{equation} \label{eight}
    T(z)=\frac{1}{\left(2\pi\sqrt{1+\frac{\pi^2}{16}}\right)^{1+i\frac{\pi}{4}}} \ T'\left( 2\pi \left(1+\frac{\pi}{4}i\right) z \right),
\end{equation}

we find that the distance from $T(P_n)$ to the curve $r = e^{\frac{4}{\pi}\theta} - {\frac{10}{3\pi}\sqrt{1+\frac{\pi^2}{16}}}$ is $\mathcal O(n^{-1})$. The case where $n$ is odd is proved in the same way using the same $T$, since

\[\frac{\frac{14}{3}\left( 1+\frac{\pi^2}{16}\right)}{2\pi\sqrt{1+\frac{\pi^2}{16}}} = \frac{7}{3\pi}\sqrt{1+\frac{\pi^2}{16}}. \qedhere\]
\end{proof}

\vspace{2pt}
\ind The next theorem gives the distance between a point on $r = e^{\frac{4}{\pi}{\theta}}-c$ and the curve $r = e^{\frac{4}{\pi}{\theta}}$.\\[-6pt]

\begin{theorem}
\label{thm10}
    Let $C_1$ be the curve given by $r=e^{\beta \theta}$ and $C_2$ be the curve given by $r=e^{\beta \theta}-c$ (where $\beta, c \geq 0$). For a point $Q(r,\theta)$ on $C_2$, the distance $d(r)$ from $Q$ to the nearest point $P$ on $C_1$ is given by
    \vspace{2pt}
    \[d(r)=\frac{c}{\sqrt{1+\beta^2}}+\mathcal{O}\left(\frac{1}{r}\right) \quad \text{as } r \to \infty.\]
\end{theorem}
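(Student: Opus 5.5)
The plan is to compare $Q$ with the point of $C_1$ at the same polar angle, then correct for the tilt of $C_1$ relative to the radial direction. The key fact is that a logarithmic spiral meets every ray at a constant angle. If $\psi$ is the angle between the tangent to $C_1$ and the radial direction, then $\cot\psi = \frac{1}{r}\frac{dr}{d\theta} = \beta$, so $\sin\psi = 1/\sqrt{1+\beta^2}$.

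First, the point $Q=(e^{\beta\theta}-c,\theta)$ lies on the same ray as $P_0=(e^{\beta\theta},\theta)\in C_1$, at distance exactly $c$ inward. Locally near $P_0$, $C_1$ is approximated by its tangent line, which makes angle $\psi$ with the segment $QP_0$. The distance from $Q$ to that tangent line is therefore $c\sin\psi = c/\sqrt{1+\beta^2}$.

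To make this rigorous, I would parametrize $C_1$ as $\gamma(s)=e^{(\beta+i)s}$ and write $Q=e^{i\theta}(R-c)$ with $R=e^{\beta\theta}$. The distance squared is
\[
D(s)=|e^{(\beta+i)s}-(R-c)e^{i\theta}|^2 .
\]
Substituting $s=\theta+u$ gives
\[
D=|Re^{(\beta+i)u}-(R-c)|^2 .
\]
Expanding for small $u$,
\[
Re^{(\beta+i)u}-(R-c) = c + R(\beta+i)u + \mathcal{O}(Ru^2).
\]
Minimizing the leading quadratic $|c+R(\beta+i)u|^2$ over real $u$ gives $u^*=-c\beta/(R(1+\beta^2))$. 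The minimum value is the squared distance from $-c$ to the line $\mathbb{R}(\beta+i)$, namely $c^2/(1+\beta^2)$. With $u=\mathcal{O}(1/R)$, the error term $Ru^2$ is $\mathcal{O}(1/R)$. Hence the distance is $c/\sqrt{1+\beta^2}+\mathcal{O}(1/R)$, and $R=r+c$, so the error is $\mathcal{O}(1/r)$.

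The main obstacle is globality: I must show that the nearest point $P$ really lies in this local window $|u|=\mathcal{O}(1/R)$, rather than on another turn of the spiral or far away along it. I would handle this in two steps.

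\begin{itemize}
\item \textbf{Other turns.} Any point of $C_1$ on another turn along the same ray has radius differing from $R$ by a factor $e^{\pm 2\pi\beta}$. Its distance from $Q$ is then of order $R$, much larger than $c$ (for $\beta>0$). When $\beta=0$, $C_1$ is the unit circle and the claim is trivial: $d=|1-(1-c)|=c$.
\item \textbf{Same turn, outside the window.} For $|u|\le\pi$, I would use the lower bound $|Re^{(\beta+i)u}-(R-c)|\ge R|e^{(\beta+i)u}-1|-c\ge \kappa R|u|-c$. This exceeds $c$ unless $|u|\le 2c/(\kappa R)$, confining the minimizer to the window where the Taylor estimate applies.
\end{itemize}
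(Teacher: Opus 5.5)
Your proof is correct, but it takes a different route from the paper's.

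\textbf{The paper's route.} It starts from the nearest point $P=(r_1,\theta_1)$ itself and uses the first-order condition that $\overrightarrow{PQ}$ is parallel to the normal $\mathbf e_r-\beta\mathbf e_\theta$ at $P$. This lets it write $Q$ in the polar frame at $P$, with components $r_1-d/\sqrt{1+\beta^2}$ and $\beta d/\sqrt{1+\beta^2}$. From there it reads off $r_2=r_1-d/\sqrt{1+\beta^2}+\mathcal O(1/r_1)$ and $\theta_2-\theta_1=\arctan(\cdots)$. It then imposes $Q\in C_2$ in the form $r_2=r_1e^{\beta(\theta_2-\theta_1)}-c$. Comparing the two expressions for $r_2$ gives $c=d\sqrt{1+\beta^2}+\mathcal O(1/r_1)$, using only the bound $d\le c$.

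\textbf{Your route.} You anchor instead at the point on the same ray as $Q$, where the offset is exactly $c$. You minimize the linearized distance explicitly (the distance to the tangent line, $c\sin\psi$), then control both the Taylor error and the location of the minimizer.

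\textbf{What each buys.} The paper's argument is shorter and never computes a minimum. However, it tacitly assumes that $P$ lies on the same turn as $Q$. Knowing that $\tan(\theta_2-\theta_1)$ is small only determines $\theta_2-\theta_1$ up to an integer multiple of $\pi$. Ruling out a shift by $2\pi k$ requires exactly the radius comparison in your first bullet. Your argument makes that localization explicit and gives a two-sided bound without invoking the critical-point condition. The cost is extra bookkeeping with $\kappa$ and the Taylor remainder, and in return you get the clean geometric explanation via the constant angle $\psi$ of the logarithmic spiral.

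\textbf{One small fix.} Your ``other turns'' bullet, as written, only treats points on the same ray, i.e.\ $u\in 2\pi\mathbb Z$. State it for all $|u|\ge\pi$, using
$|Re^{(\beta+i)u}-(R-c)|\ge |Re^{\beta u}-(R-c)|\ge R(1-e^{-\pi\beta})-c$ for $\beta>0$. This exceeds $c$ once $R$ is large, and it fits together with your bound on $|u|\le\pi$.
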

\begin{proof}
    First, let $O$ be the origin and assume $r$ is sufficiently large. The vector $\overrightarrow{PQ}$ is perpendicular to the tangent of $C_1$ at $P$. Let $\mathbf e_r$ and $\mathbf e_\theta$ be the polar unit vectors at $P$. Since $\nabla(r-e^{\beta\theta})=\mathbf e_r -\beta \mathbf e_\theta$, we have $\overrightarrow{PQ}=-d\cdot \frac{\mathbf e_r -\beta \mathbf e_\theta}{\sqrt{1+\beta^2}}$. Let the coordinates of $P$ be $(r_1,\theta_1)$. Then $\overrightarrow{OP}=r_1\mathbf{e}_r$, and
    \[\overrightarrow{OQ}=\overrightarrow{OP}+\overrightarrow{PQ}=\left( r_1 - \frac{d}{\sqrt{1+\beta^2}}\right)\mathbf{e}_r + \left(\frac{\beta d}{\sqrt{1+\beta^2}}\right)\mathbf{e}_\theta.\]
    Thus, for $Q(r_2,\theta_2)$, we have

    \[r_2^2=\left( r_1 - \frac{d}{\sqrt{1+\beta^2}}\right)^2 + \left(\frac{\beta d}{\sqrt{1+\beta^2}}\right)^2\]
    
    \[=r_1^2 - 2r_1 \frac{d}{\sqrt{1+\beta^2}}+d^2.\]
 
    Note that $d$ is bounded with respect to $r_1$ since $d(r_2)\leq c$ for all $r_2$ by the definition of $d$. Therefore,
\vspace{1.2pt}
    \[r_2 =r_1 \left( 1- 2\frac{d}{\sqrt{1+\beta^2}}\frac{1}{r_1}+\frac{d^2}{r_1^2} \right)^{\frac{1}{2}}\]
    
    \[=r_1 \left( 1- \frac{d}{\sqrt{1+\beta^2}} \frac{1}{r_1}+ \mathcal O\left(\frac{1}{r_1^2}\right) \right)\]
    (by the binomial series)

    \[=r_1 - \frac{d}{\sqrt{1+\beta^2}}+ \mathcal O\left(\frac{1}{r_1}\right). \]
\\

On the other hand,
\[\tan(\theta_2-\theta_1)=\frac{\frac{\beta d}{\sqrt{1+\beta^2}}}{ r_1 - \frac{d}{\sqrt{1+\beta^2}}}\]

and since $d$ is bounded and $r_1$ is sufficiently large, the RHS is close to $0$ (thus lies in $(-\frac{\pi}{2},\frac{\pi}{2})$), so

\[\theta_2-\theta_1 = \arctan\left(\frac{\frac{\beta d}{\sqrt{1+\beta^2}}}{ r_1 - \frac{d}{\sqrt{1+\beta^2}}}\right).\]

Therefore,

\[r_2 = e^{\beta\theta_2}-c = r_1e^{\beta(\theta_2-\theta_1)}-c\]

\[=r_1\exp \left(\beta \arctan\left(\frac{\frac{\beta d}{\sqrt{1+\beta^2}}}{ r_1 - \frac{d}{\sqrt{1+\beta^2}}}\right)\right)-c\]

\[=r_1\left(1+\beta  \frac{\frac{\beta d}{\sqrt{1+\beta^2}}}{ r_1 - \frac{d}{\sqrt{1+\beta^2}}} +\mathcal{O}\left(\frac{1}{r_1^2}\right) \right)-c\]
(by $\exp(\beta \arctan x) =1+\beta x +\mathcal O({x^2})$)

\[=r_1+\frac{\beta^2 d}{\sqrt{1+\beta^2}}-c+\mathcal{O}\left(\frac{1}{r_1}\right).\]

Thus, we have

 \[r_1 - \frac{d}{\sqrt{1+\beta^2}} =r_1+\frac{\beta^2 d}{\sqrt{1+\beta^2}}-c+\mathcal{O}\left(\frac{1}{r_1}\right),\]

 which implies

\[c=d\sqrt{1+\beta^2} + \mathcal{O}\left(\frac{1}{r_1}\right).\]

Therefore,

\[d(r)=\frac{c}{\sqrt{1+\beta^2}} + \mathcal{O}\left(\frac{1}{r}\right). \qedhere\]
  
\end{proof}

\vspace{10pt}

\ind Finally, we have arrived at the following theorem.\\[-4pt]

\begin{theorem}
 There exists an orientation-preserving isometry $T$ of the plane such that the distance from $T(P_{n})$ to the curve $r=e^{\frac{4}{\pi}\theta}$ is given by
    \[\frac{17}{24}+ \frac{(-1)^{n}}{8}+\mathcal{O}\left(\frac{1}{n}\right)\]
    as $n \to \infty$.
\end{theorem}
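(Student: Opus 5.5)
The plan is to combine Theorem~\ref{thm9} with Theorem~\ref{thm10}, using the same isometry $T$ as in \eqref{eight}, and to treat even and odd $n$ separately. By Theorem~\ref{thm9}, for each $n$ there is a point $Q_n$ on the shifted spiral
\[
C_2^{(n)}:\ r=e^{\frac{4}{\pi}\theta}-c_n, \qquad c_n=\begin{cases}\frac{10}{3\pi}\sqrt{1+\frac{\pi^2}{16}} & n \text{ even},\\[2pt] \frac{7}{3\pi}\sqrt{1+\frac{\pi^2}{16}} & n\text{ odd},\end{cases}
\]
such that $|T(P_n)-Q_n|=\mathcal O(n^{-1})$. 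The distance from a point to a fixed set is $1$-Lipschitz. Hence the distance from $T(P_n)$ to $C_1: r=e^{\frac4\pi\theta}$ differs from the distance from $Q_n$ to $C_1$ by at most $\mathcal O(n^{-1})$. So it suffices to compute $\operatorname{dist}(Q_n,C_1)$.

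For that I will apply Theorem~\ref{thm10} with $\beta=\frac4\pi$ and $c=c_n$. This requires the radial coordinate of $Q_n$ to tend to infinity at a controlled rate. From Theorem~\ref{thm6} and the form of $g_e,g_o$, we have $|g(n-\tfrac12)|\asymp n^2$. After the scaling by $\bigl(2\pi\sqrt{1+\pi^2/16}\bigr)^{-1}$ in \eqref{eight}, $|Q_n|\asymp n^2$ as well. Thus the error term $\mathcal O(1/r)$ in Theorem~\ref{thm10} is $\mathcal O(n^{-2})\subset \mathcal O(n^{-1})$.

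The arithmetic is then direct. Since $\sqrt{1+\beta^2}=\sqrt{1+16/\pi^2}=\frac4\pi\sqrt{1+\pi^2/16}$, we get
\[
\frac{c_n}{\sqrt{1+\beta^2}}=\begin{cases}\frac{10}{3\pi}\cdot\frac{\pi}{4}=\frac56, & n\text{ even},\\[2pt] \frac{7}{3\pi}\cdot\frac\pi4=\frac7{12}, & n \text{ odd}.\end{cases}
\]
These are exactly $\frac{17}{24}+\frac{(-1)^n}{8}$, since $\frac{17}{24}+\frac{3}{24}=\frac{20}{24}$ and $\frac{17}{24}-\frac3{24}=\frac{14}{24}$. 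Combining with the Lipschitz estimate gives the stated asymptotic.

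The main point needing care is the applicability of Theorem~\ref{thm10}. Its proof implicitly takes the nearest point $P$ on $C_1$ to be the nearby point on the adjacent arc, with $d\le c$, rather than a point on another turn of the spiral. I would justify this by noting that $Q_n$ has the same $\theta$ as the point $e^{\frac4\pi\theta}$ of $C_1$ at radial distance $c_n$, which bounds the distance by $c_n$. Other turns of $C_1$ are at radial separation $\asymp r$, since the ratio between successive turns is $e^{8}$. Hence for large $n$ the minimizer lies on the local arc, and Theorem~\ref{thm10} applies.
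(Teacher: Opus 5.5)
Your proposal is correct and follows the paper's proof. The paper likewise picks a point $p_n$ on the shifted spiral within $\mathcal{O}(n^{-1})$ of $T(P_n)$ via Theorem 9, then applies Theorem 10 to $p_n$ after checking $|p_n|\ge n$; your sharper $|Q_n|\asymp n^2$ serves the same purpose. It then transfers the estimate back to $T(P_n)$ with two triangle inequalities, which is exactly your 1-Lipschitz observation. Your extra check that the nearest point of $r=e^{\frac{4}{\pi}\theta}$ lies on the adjacent turn is left implicit in the paper's Theorem 10 (through $d\le c$), so it clarifies the argument rather than departing from it.
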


\begin{proof}

Let $C$ be the set of points in the plane satisfying $r=e^{\frac{4}{\pi}\theta}$, and let $d(T(P_n),C)$ denote the distance from the\\
point $T(P_n)$ to the set $C$. First, we prove that $d(T(P_{2m}),C)=\frac{5}{6}+\mathcal{O}(m^{-1})$.

Suppose that $n$ is even. Then, for any point $p$ on the curve $r=e^{\frac{4}{\pi}\theta} - {\frac{10}{3\pi}\sqrt{1+\frac{\pi^2}{16}}}$ and any point $q$ on the curve $r=e^{\frac{4}{\pi}\theta}$, the following triangle inequality holds:

\[|T(P_n)-p|+|p-q| \geq |T(P_n)-q| \geq d(T(P_n),C)\]

(where $T$ is defined as in \eqref{eight}). Let $p_n$ be a point on $r=e^{\frac{4}{\pi}\theta} - {\frac{10}{3\pi}\sqrt{1+\frac{\pi^2}{16}}}$ satisfying $|T(P_n)-p_n|\leq \frac{A}{n}$ (the existence of such a point for some appropriate constant $A>0$ is guaranteed by Theorem 9). Also, let $q_n$ be a point on $C$ that minimizes the distance to $p_n$ (such a point exists since $C$ is a closed set). Then, by Theorem 10, we have

\[\left||p_n-q_n|-\frac{5}{6}\right|\leq \frac{B}{|p_n|}\]

for some constant $B>0$, since

\[\frac{\frac{10}{3\pi}\sqrt{1+\frac{\pi^2}{16}}}{\sqrt{1+\left(\frac{4}{\pi}\right)^2}} = \frac{5}{6}.\]

Note that $\frac{B}{|p_n|}\leq\frac{B}{n}$ holds, because

\[|p_n| \geq |T(P_n)|-\frac{A}{n}\geq |P_n|-c - \frac{A}{n} \geq \frac{1}{\left | 2\pi\left(1+\frac{\pi}{4}i\right) \right |}\left| n- \frac{1}{2}\right|^2 - \frac{1}{2\pi} \left|n-\frac{1}{4}+\frac{43}{24}\pi i  \right|-c-\frac{A}{n} \geq n\]

for sufficiently large $n$ (where $c$ is a constant accounting for the translational component of the isometry $T$).

Consequently,
\[d(T(P_n),C) \leq  \frac{A}{n} +\frac{5}{6} +\frac{B}{n}\]

for all sufficiently large $n$.

Now, let $\tilde{q}_n$ be a point on $C$ satisfying $|T(P_n)-\tilde{q}_n|=d(T(P_n),C)$ (again, existence is guaranteed since $C$ is a closed set). Then,

\[d(T(P_n),C) = |\tilde{q}_n-T(P_n)| \geq |\tilde{q}_n-p_n| - |p_n-T(P_n)|\]
\[\geq |q_n-p_n| - |p_n-T(P_n)| \geq \frac{5}{6}-\frac{B}{n}-|p_n-T(P_n)|\]
\[\geq \frac{5}{6}-\frac{B}{n}-\frac{A}{n}\]

for all sufficiently large $n$. Thus, since $\left|d(T(P_n),C)-\frac{5}{6}\right| \leq \frac{A}{n}+\frac{B}{n}$ for large $n$, it follows that for even $n$,

\vspace{-3pt}

\[d(T(P_n),C)=\frac{5}{6}+\mathcal{O}\left(\frac{1}{n}\right).\]

For odd $n$, the proposition is proved by the same argument (using the same $T$), noting that

\[\frac{\frac{7}{3\pi}\sqrt{1+\frac{\pi^2}{16}}}{\sqrt{1+\left(\frac{4}{\pi}\right)^2}} = \frac{7}{12}. \qedhere\]

\end{proof}

\section{Conclusion}

We summarize our main result as follows.

\begin{theorem*}
    There exists an orientation-preserving isometry $T$ of the plane such that the distance from $T(P_{n})$ to the curve $r=e^{\frac{4}{\pi}\theta}$ is expressed as
   \[\begin{cases} 
        \ \frac{5}{6}+\mathcal{O}\left(\frac{1}{n}\right)  
        &\text{if $n$ is even} \\[6pt]
        \frac{7}{12}+\mathcal{O}\left(\frac{1}{n}\right) 
        & \text{if $n$ is odd}
     \end{cases}\]
    as $n \to \infty$.
\end{theorem*}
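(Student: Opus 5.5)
This summary theorem is a direct restatement of the preceding theorem, so I will derive it from that result rather than redo any analysis. The preceding theorem already gives an orientation-preserving isometry $T$, defined in \eqref{eight}, with
\[d(T(P_n),C)=\frac{17}{24}+\frac{(-1)^n}{8}+\mathcal{O}\left(\frac1n\right),\]
where $C$ is the curve $r=e^{\frac{4}{\pi}\theta}$. (Its proof in fact establishes the two parity cases separately.) I will use this same $T$ and split according to the parity of $n$.

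For even $n$, we have $(-1)^n=1$, so the constant term is
\[\frac{17}{24}+\frac{3}{24}=\frac{20}{24}=\frac56.\]
For odd $n$, we have $(-1)^n=-1$, so the constant term is
\[\frac{17}{24}-\frac{3}{24}=\frac{14}{24}=\frac{7}{12}.\]
These agree with the constants obtained directly in the proof via Theorem 10:
\[\frac{\frac{10}{3\pi}\sqrt{1+\frac{\pi^2}{16}}}{\sqrt{1+\frac{16}{\pi^2}}}=\frac56
\quad\text{and}\quad
\frac{\frac{7}{3\pi}\sqrt{1+\frac{\pi^2}{16}}}{\sqrt{1+\frac{16}{\pi^2}}}=\frac{7}{12}.\]
The underlying identity is $\sqrt{1+\frac{\pi^2}{16}}\big/\sqrt{1+\frac{16}{\pi^2}}=\frac{\pi}{4}$. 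This also shows that the mean term $\frac{17}{24}$ and the oscillation $\frac18$ come from the average and half-difference of $\frac56$ and $\frac{7}{12}$.

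The only point that needs care is that a single isometry $T$ serves both parities. This holds because Theorem 9 was proved with one $T'$ coming from the single relation in Theorem 6, where the parity enters only through the coefficient $(-1)^n$. The $\mathcal{O}(1/n)$ error terms carry over unchanged, since we are only rewriting the constant.

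I expect no real obstacle here. The substantive work, namely the equivalence in Theorem 6, the offset-spiral limit in Lemma 7 and Corollary 8, and the normal-distance estimate in Theorem 10, has already been done, and this statement is simply the casewise form of the preceding theorem.
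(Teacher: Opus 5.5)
Your proposal is correct and follows the paper's own route. The summary theorem is just the casewise form of the last theorem of Section~4, whose proof obtains $\frac56$ for even $n$ and $\frac{7}{12}$ for odd $n$ via Theorems~9 and~10 with one isometry $T$, and your arithmetic ($\frac{17}{24}\pm\frac{3}{24}$, and $\sqrt{1+\pi^2/16}/\sqrt{1+16/\pi^2}=\pi/4$) checks out. Your reason that a single $T$ serves both parities (Theorem~6 is one relation, with parity entering only through $(-1)^n$) is also the paper's; strictly, the paper derives the $S_0(n)$ asymptotics separately for $n=2m$ and $n=2m+1$, but the two additive constants agree because $S_0(2m+1)-S_0(2m)=(2m+\frac12)^{i\pi/2}$ matches the difference of the main terms $-\frac{(-1)^n}{2}(n-\frac12)^{i\pi/2}$ up to $\mathcal{O}(1/m)$.
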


\ind It is also evident from Theorem 9 that the points $T(P_n)$ lies on the ``inner'' side of the spiral; that is, they are located slightly to the left of the curve with respect to the direction of increasing $r$.

\ind It is quite intriguing that the shortest distance between the centers of this structure, constructed from unit regular polygons, and the spiral $r=e^{\frac{4}{\pi}\theta}$ (in an appropriate coordinate system) converges to such a specific rational number.

\ind The term $\frac{(-1)^{n}}{8}$ in the theorem above arises from the inherent nature of this structure. As illustrated in \autoref{fig.2}, the ``bending'' that shapes the structure into a spiral occurs exclusively at the centers of the regular odd-sided polygons, and not at the centers of the even-sided ones. Consequently, if we were to construct a sequence by connecting only the regular odd-sided polygons (i.e., regular $n$-gons for $n=3,5,7,9,\dots$), this alternating term would vanish. Indeed, the following theorem holds:

\begin{theorem*}
    There exists an orientation-preserving isometry $T_{o}$ of the plane such that the distance from $T_o(Q_{n})$ to the curve $r=e^{\frac{4}{\pi}\theta}$ is expressed as
    \[\frac{7}{24}+\mathcal{O}\left(\frac{1}{n}\right)\]
    as $n \to \infty$, where
    \[Q_n=\sum_{k=2}^{n}\frac{1}{2}{\left (\cot\left(\frac{\pi}{2k-1}\right)+\cot{\left(\frac{\pi}{2k+1}\right)}\right )}\ e^{i\pi(H_{2k}-\frac{1}{2}H_k)}.\]
\end{theorem*}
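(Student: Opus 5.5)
The plan is to run the same pipeline used for $P_n$ (Theorem 6, then Lemma 7 with Corollary 8, then Theorems 9 and 10) on $Q_n$. The work is simpler in one respect: there is no alternating sum $S_0$. First I expand the summand. The Laurent expansion of $\cot$ gives
\[\tfrac12\left(\cot\tfrac{\pi}{2k-1}+\cot\tfrac{\pi}{2k+1}\right)=\tfrac{2k}{\pi}-\tfrac{\pi}{3}\cdot\tfrac{1}{k}+\mathcal O(k^{-3}),\]
using $\tfrac{1}{2k-1}+\tfrac{1}{2k+1}=\tfrac1k+\mathcal O(k^{-3})$. For the angle, Lemma 3 applied to $H_{2k}$ and $H_k$ gives
\[\pi\left(H_{2k}-\tfrac12H_k\right)=\pi\left(\tfrac{\gamma}{2}+\log\left(2k+\tfrac12\right)-\tfrac12\log\left(k+\tfrac12\right)+\left(\tfrac{1}{96}-\tfrac{1}{48}\right)\tfrac1{k^2}\right)+\mathcal O(k^{-3}).\]
I then rewrite $\log(2k+\tfrac12)-\tfrac12\log(k+\tfrac12)$ as $\log 2+\tfrac12\log(k+\delta)+\tfrac{\kappa}{k^2}+\mathcal O(k^{-3})$. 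A direct check shows the $1/k$ terms cancel already for $\delta=0$: $\tfrac1{4k}-\tfrac1{4k}=0$. So I take $\delta=0$ and collect the $1/k^2$ corrections into one constant. This is the key structural difference from $P_n$: there is no oscillating $\tfrac{(-1)^k}{2k}$ phase term, so no parity dependence can appear.

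Next, as in the derivation of \eqref{four}, I absorb the constant phase $e^{i\pi(\gamma/2+\log2)}$ into $\sim_{\circlearrowleft}$. I Taylor-expand $e^{i\pi\kappa'/k^2}=1+i\pi\kappa'/k^2+\mathcal O(k^{-4})$, multiply by the magnitude, and discard everything that is $\mathcal O(k^{-2})$ in absolute value, since those terms sum to a constant plus $\mathcal O(n^{-1})$. This leaves
\[Q_n\sim_{\circlearrowleft}\tfrac{2}{\pi}\sum_{k=2}^{n}k^{1+i\frac{\pi}{2}}+\mu\sum_{k=2}^{n}k^{-1+i\frac{\pi}{2}}\]
for an explicit complex constant $\mu$ combining $-\tfrac{\pi}{3}$ and $\tfrac{2}{\pi}\cdot i\pi\kappa'$. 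The first sum is handled exactly as $S_1$ in \eqref{five}, via \eqref{three} of Lemma 5. The second is handled as $S_{-1}$ in \eqref{six}, via \eqref{two}. The Bernoulli-remainder integrals converge at rate $\mathcal O(n^{-1})$ by the same estimates. The result is an expression
\[A\,n^{2+i\frac{\pi}{2}}+B\,n^{1+i\frac{\pi}{2}}+C\,n^{i\frac{\pi}{2}}.\]

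Then I normalize to the form \eqref{nine}. Multiplying by $\pi(1+\tfrac{\pi}{4}i)$ makes the leading coefficient $1$. The $n^{1+i\pi/2}$ coefficient should then be proportional to $(1+\tfrac{\pi}{4}i)$, because it comes from $\tfrac12 f(n)$ in Euler--Maclaurin. To make it exactly $(1+\tfrac{\pi}{4}i)$, I substitute $t=n+\tfrac12$. The expansion $(t-\tfrac12)^{2+i\pi/2}=t^{2+i\pi/2}-\tfrac12(2+\tfrac{i\pi}{2})t^{1+i\pi/2}+\dots$ shifts the lower coefficients. If the scaling comes out wrong, I will instead allow $t=\lambda(n+\sigma)$ and absorb $\lambda^{i\pi/2}$ into the rotation and $\lambda^{2}$ into a homothety, as in \eqref{eight}. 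Reading off $a$ and $b$, Lemma 7 and Corollary 8 put the rescaled points within $\mathcal O(n^{-1})$ of $r=e^{\frac4\pi\theta}-(b-\tfrac12)(1+\tfrac{\pi^2}{16})$. After the rescaling step of Theorem 9, Theorem 10 with $\beta=4/\pi$ and the triangle-inequality argument of the previous theorem give the distance $\frac{(b-\frac12)(1+\pi^2/16)}{\text{scale}\cdot\sqrt{1+16/\pi^2}}+\mathcal O(n^{-1})$. This should equal $\tfrac{7}{24}$, and since there is no $(-1)^n$ term it holds for all $n$ uniformly.

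The main obstacle is the second-order bookkeeping. This means computing $\kappa'$ exactly, including the $1/k^2$ terms from $\log(2k+\tfrac12)$ and $\log(k+\tfrac12)$, together with the Euler--Maclaurin boundary terms $\tfrac{f(n)}2$ and $\tfrac{f'(n)}{12}$ and the re-centering shift. Only the imaginary part $b$ of the $t^{i\pi/2}$ coefficient matters, because the final distance depends only on $b$. So I would first track $\Im$-contributions carefully and check them against the target: $b-\tfrac12$ should equal $\tfrac{7}{24}\cdot\text{scale}\cdot\sqrt{1+16/\pi^2}\,/(1+\tfrac{\pi^2}{16})$. This consistency check serves as a guard against arithmetic slips.
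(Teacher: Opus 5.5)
Your pipeline is the one the paper intends; its only justification is that the result follows by methods analogous to those for $P_n$. As written, though, the proposal does not prove the statement. You never compute $b$, and two of your concrete choices would derail the computation.

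First, the magnitude expansion drops the prefactor $\tfrac12$ on the second-order term. The correct expansion is
\[\tfrac12\left(\cot\tfrac{\pi}{2k-1}+\cot\tfrac{\pi}{2k+1}\right)=\tfrac{2k}{\pi}-\tfrac{\pi}{6}\left(\tfrac{1}{2k-1}+\tfrac{1}{2k+1}\right)+\mathcal O(k^{-3})=\tfrac{2k}{\pi}-\tfrac{\pi}{6k}+\mathcal O(k^{-3}).\]
Together with the phase correction $e^{i\pi/(48k^2)}$ (your $\kappa'=\tfrac1{32}+\tfrac1{96}-\tfrac1{48}=\tfrac1{48}$), this gives $\mu=-\tfrac{\pi}{6}+\tfrac{i}{24}$. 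With your $-\tfrac{\pi}{3}$, the same pipeline outputs $b=3$ and a limiting distance of $\tfrac58$. Your final check would then fail without telling you why.

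Second, the recentering $t=n+\tfrac12$ is not needed and is actually harmful. Since $\sum_{k=2}^{n}k^{1+i\frac{\pi}{2}}\sim_{\circlearrowleft}\frac{n^{2+i\frac{\pi}{2}}}{2+i\frac{\pi}{2}}+\tfrac12 n^{1+i\frac{\pi}{2}}+\frac{1+i\frac{\pi}{2}}{12}n^{i\frac{\pi}{2}}$, multiplying by $\pi(1+\tfrac{\pi}{4}i)$ already makes the $n^{1+i\pi/2}$ coefficient exactly $1+\tfrac{\pi}{4}i$. Substituting $n=t-\tfrac12$ would cancel the $t^{1+i\pi/2}$ term entirely, because $\tfrac12(2+i\tfrac{\pi}{2})=1+\tfrac{\pi}{4}i$. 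The result would then no longer have the form \eqref{nine}. The shift $n-\tfrac12$ in the paper comes from its summation variable $k+\tfrac12$. Here the $1/k$ phase terms cancel with $\delta=0$, so the right choice is $t=n$. Your fallback $t=\lambda(n+\sigma)$ works only when $\lambda(1-2\sigma)=1$.

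With these corrections the bookkeeping closes:
\[\pi\left(1+\tfrac{\pi}{4}i\right)Q_n\sim_{\circlearrowleft}n^{2+i\frac{\pi}{2}}+\left(1+\tfrac{\pi}{4}i\right)\left(n^{1+i\frac{\pi}{2}}+\left(\tfrac14+\tfrac{5}{12}\pi i\right)n^{i\frac{\pi}{2}}\right),\]
because $\frac{1+i\pi/2}{6}+\frac{\pi\mu}{i\pi/2}=\left(\tfrac16+\tfrac{\pi}{12}i\right)+\left(\tfrac1{12}+\tfrac{\pi}{3}i\right)$. Hence $a=\tfrac14$ and $b=\tfrac53$, and Lemma 7 gives the offset $-\tfrac76\left(1+\tfrac{\pi^2}{16}\right)$. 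Dividing by the scale $\pi\sqrt{1+\pi^2/16}$ turns this into $\tfrac{7}{6\pi}\sqrt{1+\pi^2/16}$. Theorem 10 with $\beta=\tfrac4\pi$ then gives the distance $\tfrac{b-\frac12}{4}=\tfrac{7}{24}$.

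Checking against the target value is a reasonable sanity test. The proof, however, must actually derive $b=\tfrac53$ rather than infer it from the answer it is supposed to produce.
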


This result can be derived by applying methods analogous to those used in this paper for \eqref{one}.

\section*{Acknowledgment}
I would like to express my sincere gratitude to Dr. Bumtle Kang for kindly providing the endorsement required for my first submission to arXiv. I also extend my thanks to GeoGebra, which was instrumental in inspiring the problem and generating all the figures in this paper.

\providecommand{\bysame}{\leavevmode\hbox to3em{\hrulefill}\thinspace}
\providecommand{\MR}{\relax\ifhmode\unskip\space\fi MR }
% \MRhref is called by the amsart/book/proc definition of \MR.
\providecommand{\MRhref}[2]{%
  \href{http://www.ams.org/mathscinet-getitem?mr=#1}{#2}
}
\providecommand{\href}[2]{#2}

\end{document}